\newtheorem{theorem}{Theorem}
\newtheorem{lemma}[theorem]{Lemma}
\newtheorem{corollary}[theorem]{Corollary}
\newtheorem{example}[theorem]{Example}
\newenvironment{proof}{{\bf Proof:}}{\hfill$\bull$\medskip}
\newcommand{\bull}{\vrule height 1.8ex width 1.0ex depth 0ex}
\title{Explicit barycentric weights for polynomial interpolation in the roots or extrema of classical orthogonal polynomials}
\author{Haiyong Wang\footnote{Corresponding author: haiyong.wang@cs.kuleuven.be or why198309@yahoo.com.cn},
Daan Huybrechs\footnote{E-mail: daan.huybrechs@cs.kuleuven.be}
and Stefan Vandewalle\footnote{E-mail: stefan.vandewalle@cs.kuleuven.be}\\[3\jot]
        K.U.Leuven\\ Department of Computer Science\\
        Celestijnenlaan 200A, B-3001 Leuven, Belgium.}
\date{\today}
\begin{document}
% \baselineskip 0.55cm
% \begin{center}
% {\Large\bf Explicit barycentric weights for polynomial interpolation
% in the roots or extrema of classical orthogonal polynomials}

% \vspace{0.2in} Haiyong Wang

% \end{center}

\maketitle

\begin{abstract}
Barycentric interpolation is arguably the method of choice for
numerical polynomial interpolation. The polynomial interpolant is
expressed in terms of function values using the so-called
barycentric weights, which depend on the interpolation points. Few
explicit formulae for these barycentric weights are known. In [H.
Wang and S. Xiang, Math. Comp., 81 (2012), 861--877], the authors
have shown that the barycentric weights of the roots of Legendre
polynomials can be expressed explicitly in terms of the weights of
the corresponding Gaussian quadrature rule. This idea was
subsequently implemented in the Chebfun package [L. N. Trefethen and
others, The Chebfun Development Team, 2011] and in the process
generalized by the Chebfun authors to the roots of Jacobi, Laguerre
and Hermite polynomials. In this paper, we explore the generality of
the link between barycentric weights and Gaussian quadrature and
show that such relationships are related to the existence of lowering operators
for orthogonal polynomials. We supply an exhaustive list of cases,
in which all known formulae are recovered and also some new formulae are derived, including the barycentric
weights for Gauss-Radau and Gauss-Lobatto points. Based on a fast ${\mathcal O}(n)$ algorithm for the computation of Gaussian quadrature, due to Hale and Townsend, this leads to an ${\mathcal O}(n)$ computational scheme for barycentric weights.

% For the Jacobi case, known results on the barycentric weights of the Chebyshev points are recovered as special cases and some new results, such as the
% barycentric weights associated with the Jacobi points and the Gauss-Jacobi-Lobatto points, are obtained. We also show that the interpolants in the roots or extrema of classical orthogonal polynomials can be computed rapidly and stably by using their barycentric representations.
\end{abstract}

\vspace{0.05in}

{\bf Keywords.}  barycentric interpolation formula, Gaussian
quadrature, lowering operators, orthogonal polynomials

\vspace{0.05in}

{\bf AMS subject classifications.} 41A05, 65D05, 65D15

\section{Introduction}\label{s:intro}

Polynomial interpolation is a fundamental tool in many areas of
numerical analysis
\cite{cheney1966approximation,dahlquist2008numerical,davis1963interpolation,gautschi1997numericalanalysis,suli2003,trefethen2012atap}.
It is usually introduced using the Lagrange form of the
interpolating polynomial, as follows. Let $\{x_j\}_{j=0}^{n}$ be a
set of distinct nodes. Then the polynomial of degree $n$ that
interpolates the function $f(x)$ at these points may be written as
\begin{equation}\label{eq:lagrange}
p_n(x)=\sum_{j=0}^{n}f(x_j)\ell_j(x),
\end{equation}
where
\[
\ell_j(x)=\prod_{k\neq j}\frac{x-x_k}{x_j-x_k}, \quad
j=0,1,\ldots,n,
\]
are the Lagrange fundamental polynomials. The Lagrange form of the interpolating polynomial~\eqref{eq:lagrange} is not advocated for numerical computations, as typical algorithms require $\mathcal{O}(n^2)$ operations. Moreover, they are numerically unstable and each time a node $x_j$ is modified or added, all Lagrange fundamental polynomials have to be recalculated~\cite{berrut2004barycentric}.

In order to obtain good approximations via interpolation, the
choice of interpolation nodes is particularly important. For example, it is well-known that equispaced
points give rise to the Runge phenomenon
when the number of interpolation points is large. The interpolating
polynomial diverges, even when the function $f$ to be interpolated
is analytic. In order to avoid the occurrence of Runge's phenomenon,
various techniques have been proposed over the past decades and we
refer the reader to \cite{platte2011impossibility} for a comprehensive discussion. In
practice, the interpolation nodes with the density distribution
$1/\sqrt{1-x^2}$ are optimal in various senses for polynomial
approximation on $[-1,1]$ and they typically lead to well-behaved
Lagrange interpolation \cite{trefethen2012atap}. Good candidates
are the roots and extrema of certain orthogonal polynomials.

Interpolation approximations based on the roots of orthogonal polynomials are widely applied in numerical integration, spectral methods, etc. For computational purposes, an alternative and more preferable approach to the Lagrange form is to rewrite the interpolation polynomial as a sum of the corresponding orthogonal polynomials (see, for example, \cite{davis1984numericalintegration,mason2003chebyshev,shen2006spectral}). Let $\{\pi_n(x)\}_{n\in\mathbb{N}}$ be a sequence of polynomials
orthogonal with respect to a given nonnegative and integrable weight function $\omega(x)$ on $(a,b)$, and
\begin{align}\label{eq:orthog relation}
\int_{a}^{b}\omega(x)\pi_m(x)\pi_n(x)dx=h_n\delta_{mn},\quad
m,n\geq0,
\end{align}
where $h_n$ is a positive normalization constant and $\delta_{mn}$ is the Kronecker delta. Suppose that the interpolation nodes $\{x_j\}_{j=0}^{n}$ are the zeros of $\pi_{n+1}(x)$, then the interpolating polynomial $p_n(x)$ can be written as a linear combination of $\{\pi_k(x)\}_{k=0}^{n}$
\begin{equation}\label{eq:orthog_expansion}
p_n(x)=\sum_{k=0}^{n}a_k\pi_k(x),
\end{equation}
where the coefficients $a_k$ are given by
\begin{equation*}
a_k=h_k^{-1}\sum_{j=0}^{n}w_jf(x_j)\pi_k(x_j), \quad  k=0,\cdots,n,
\end{equation*}
and $\{w_j\}_{j=0}^{n}$ are the Gaussian quadrature weights corresponding to the weight $\omega(x)$. Although this form of the interpolating polynomial is numerically stable, the calculation of the coefficients $\{a_k\}_{k=0}^{n}$ takes $\mathcal{O}(n^2)$ operations by direct evaluation. In the special case of Chebyshev points, the cost can be reduced to $\mathcal{O}(n\log n)$ using the FFT. Except for that case, using the form~\eqref{eq:orthog_expansion} leads to $\mathcal{O}(n^2)$ methods which makes it inefficient for large $n$.

The fast computation of the Lagrange interpolation polynomial has
received substantial attention over the past decades (see
\cite{berrut2004barycentric,dutt1996fastpolynomial,salzer1972chebyshev,wang2012legendre,werner1984interpolation} and references
therein). From a computational point of view, it is recommended to
apply the barycentric representation of the interpolating
polynomial~\cite{berrut2004barycentric}. The barycentric formula, which we shall
review in Section \ref{s:bary}, equation~\eqref{eq:bary_form}, has
several attractive features such as stability and high efficiency.
For example, when the interpolation nodes are Chebyshev points of
the first or second kind, the evaluation of the interpolating
polynomial requires only $\mathcal{O}(n)$ operations
\cite{henrici1982essentials,salzer1972chebyshev}. For other sets of interpolation points, no
explicit formulae for the barycentric weights are known. Direct
computation of the barycentric weights again requires
$\mathcal{O}(n^2)$ operations.
% However, when the interpolation nodes are the roots or extrema of
% classical orthogonal polynomials such as Jacobi polynomials, there
% is a problem in using its barycentric formula: explicit forms of the
% barycentric weights are unknown. Direct computation of the
% barycentric weights again requires $\mathcal{O}(n^2)$ operations.

In this paper we shall devote our attention to the study of the
barycentric weights for roots and extrema of the classical
orthogonal polynomials. This study is motivated by the earlier
observation by the first author in \cite{wang2012legendre} that a
simple relationship exists between Gauss-Legendre quadrature weights
$w_j$ and barycentric interpolation weights
$\lambda_j^{\mathrm{Leg}}$ :
\begin{equation}\label{eq:weights_legendre}
\lambda_j^{\mathrm{Leg}}=(-1)^j\sqrt{(1-x_j^2)w_j}, \qquad
j=0,\ldots,n,
\end{equation}
for the case of interpolation in the roots $x_j$ of the Legendre polynomial of degree $n+1$. Formula~\eqref{eq:weights_legendre}
was implemented in the Chebfun package \cite{trefethen2011chebfun4} as part of the \texttt{legpts} routine \cite{hale2012chebfunquadrature}.
Several useful generalizations were identified by the Chebfun authors in the process. In particular, formula~\eqref{eq:weights_legendre} remains
valid for the more general family of Jacobi polynomials. Furthermore, similar identities hold for the roots of Laguerre polynomials,
\begin{equation}\label{eq:weights_laguerre_chebfun}
 \lambda_j^{\mathrm{Lag}} = (-1)^j \sqrt{x_jw_j}, \qquad  j = 0,\ldots,n,
\end{equation}
and Hermite polynomials,
\begin{equation}\label{eq:weights_hermite_chebfun}
 \lambda_j^{\mathrm{H}} = (-1)^j \, \sqrt{w_j}, \qquad j=0,\ldots,n,
\end{equation}
see routines \texttt{legpts}, \texttt{chebpts}, \texttt{jacpts},
\texttt{lagpts} and \texttt{hermpts} in Chebfun. These developments are detailed in \cite{hale2012chebfunquadrature} and \cite[p.151-152]{trefethen2012atap}.

The motivation for formula \eqref{eq:weights_legendre} was based on
an explicit form of the Gauss-Legendre quadrature weights, see
\cite[Thm.~3.1]{wang2012legendre}. The above formulae lead to the
following question: how general is the link between barycentric
weights and weights of Gaussian quadrature? For this purpose, we
describe an alternative analysis leading to
\eqref{eq:weights_legendre}--\eqref{eq:weights_hermite_chebfun}
using the notion of \emph{lowering operators} of orthogonal
polynomials. This yields additional formulae for interpolation at
the extremae of the classical orthogonal polynomials (Jacobi,
Laguerre and Hermite), corresponding to the Radau and Lobatto
variants of Gaussian quadrature.

% We show that the barycentric weights for
% these points can be expressed explicitly in terms of the nodes and
% weights of the corresponding Gauss quadrature rule

Hence, the computation of the barycentric weights has been
transformed into the computation of the nodes and weights of
Gaussian quadrature rules, for which we can apply the well-known
Golub-Welsch algorithm \cite{golub1969gauss} in $\mathcal{O}(n^2)$
operations. Faster algorithms have been described with optimal
${\mathcal O}(n)$ computational complexity for specific cases. The
first such algorithm is due to Glaser, Liu and Rokhlin, requiring
only $\mathcal{O}(n)$ operations for the computation of Gaussian
quadrature for the classical polynomials \cite{glaser2007roots}.
More recently, two approaches have been described with the same
optimal complexity, but with much reduced constants, based on
exploiting known asymptotic behaviour of orthogonal polynomials. The
approach of Hale and Townsend includes the case of Gauss-Jacobi
quadrature \cite{hale2012fastgauss}. Bogaert, Michiels and Fostier
describe a similar approach for Gauss-Legendre quadrature
\cite{bogaert2012legendre}. As a result of these developments, it is
evident that the polynomial interpolants in the roots or extrema of
Jacobi polynomials can be computed in only $\mathcal{O}(n)$
operations as well by using their barycentric representations.

%Compared with (1.2), the proposed results reduce the computational complexity from $\mathcal{O}(n^2)$ to $\mathcal{O}(n)$ operations, which is more efficient for large values of $n$.

% The current study was motivated by the earlier observation in
% \cite{wang} that a simple relationship exists between Gauss-Legendre
% quadrature weights $w_j$ and barycentric interpolation weights
% $\lambda_j^{\mathrm{Leg}}$,
% \begin{equation}\label{eq:weights_legendre}
% \lambda_j^{\mathrm{Leg}}=(-1)^j\sqrt{(1-x_j^2)w_j}, \qquad
% j=0,\ldots,n,
% \end{equation}
% for the case of interpolation in the roots $x_j$ of the Legendre polynomial of degree $n+1$. This observation was based on the explicit form of the Gauss-Legendre quadrature weights, see \cite[Thm. 3.1]{wang}. The systematic study in the current paper shows a surprising and useful generality of this result.

This paper is organized as follows. In the next section, we start with some known results about the barycentric interpolation formula and present a derivation of the barycentric weights for the zeros of orthogonal polynomials. In Section~\ref{s:radaulobatto}, we explore the explicit forms of the barycentric weights for the zeros of orthogonal polynomials with some additional points. We give several numerical examples in Section \ref{s:examples} and conclude with some remarks in Section \ref{s:remarks}.

\section{Barycentric interpolation formula}\label{s:bary}

\subsection{First and second barycentric interpolation formula}

In this section we review some facts about the barycentric interpolation formula. Let
\begin{equation}\label{eq:ell}
\ell(x)=(x-x_0)(x-x_1)\cdots(x-x_n)
\end{equation}
be the monic polynomial of degree $n+1$ that vanishes at the interpolation nodes $x_j$. Then the Lagrange form of the interpolating polynomial $p_n(x)$ can be rewritten in barycentric form as
\begin{equation}\label{eq:bary_form}
p_n(x)=\frac{\displaystyle
\sum_{j=0}^{n}\frac{\lambda_j}{x-x_j}f(x_j)} {\displaystyle
\sum_{j=0}^{n}\frac{\lambda_j}{x-x_j}},
\end{equation}
where the barycentric weights are defined by \cite[p.~218]{stiefel1963introduction}
\begin{equation}\label{eq:weight}
\lambda_j=\frac{1}{\Pi_{j\neq k} (x_j-x_k)} = \frac{1}{\ell{'}(x_j)}, \quad j=0,1,\ldots,n.
\end{equation}
Expression~\eqref{eq:bary_form} is the so-called \emph{second form of the barycentric formula}. Due to the division, the barycentric weights can be simplified by cancelling the common factors without altering the result. We will call the resulting weights the simplified barycentric weights.

The \emph{first form of the barycentric formula} is given by
\begin{equation}\label{eq:bary_form1}
 p_n(x) = \ell(x) \sum_{j=0}^n \frac{\lambda_j}{x-x_j}f(x_j),
\end{equation}
with the weights still defined by~\eqref{eq:weight}. A disadvantage
in this case is that common factors of $\lambda_j$ may not be
cancelled, which leads to more complicated expressions later on. On
the other hand, it is shown recently in \cite{webb2012stability} that the second
formula is not stable for (complex) values of $x$ away from the
interpolation interval, whereas the first formula is. For this
reason we include results for the full barycentric weights defined
by~\eqref{eq:weight} as well as the simplified ones.

For convenience, we assume throughout this paper that the interpolation nodes $x_j$ are monotonic and hence, the barycentric weights $\lambda_j$ always have alternating signs. For points inside the interpolation interval, the barycentric formula \eqref{eq:bary_form} has been proved to be numerically stable for any set of interpolating points with a small Lebesgue constant \cite{higham2004barycentric}. For a general set of interpolation nodes, the computation of the barycentric weights $\{\lambda_j\}_{j=0}^{n}$ requires $\mathcal{O}(n^2)$ operations \cite{werner1984interpolation}. However, for several important sets of points such as Chebyshev points of the first and second kind, explicit formulae for these barycentric weights $\lambda_j$ are known. For example, for the Chebyshev points of the first kind
\begin{equation*}
x_j=\cos\left(\frac{2j+1}{2n+2}\pi\right),\quad j=0,1,\ldots,n,
\end{equation*}
the simplified barycentric weights are given
by~\cite[p.~249]{henrici1982essentials}
\begin{equation}\label{eq:weights_firstkind}
\lambda_j^{\mathrm{CH1}}=(-1)^j\sin\left(\frac{2j+1}{2n+2}\pi\right).
\end{equation}
For the Chebyshev points of the second kind
\begin{equation}\label{eq:points_secondkind}
x_j=\cos\left(\frac{j}{n}\pi\right),\quad j=0,1,\ldots,n,
\end{equation}
the simplified barycentric weights are given by \cite{salzer1972chebyshev}
\begin{equation}\label{eq:weights_secondkind}
\lambda_j^{\mathrm{CH2}}=(-1)^j\delta_j,\quad
\delta_j=\bigg\{\begin{array}{cc}
                                          1/2,   & \mbox{$\textstyle  j=0$ or $j=n$},\\
                                          1,   & \mbox{otherwise}.
                                        \end{array}
\end{equation}
Thus, each evaluation of the interpolation formulae $p_n(x)$ for Chebyshev points can be implemented in only $\mathcal{O}(n)$ operations.

\subsection{Explicit barycentric weights in terms of Gaussian quadrature}

Equation~\eqref{eq:weight} shows that the barycentric weights are
given in terms of the derivative of the polynomial $\ell(x)$ that
vanishes at the interpolation nodes. In the following we consider
the case where $\ell(x)$ is an orthogonal polynomial with respect to a weight function $w(x)$.
% For the classical polynomials we shall give explicit barycentric weights in terms of the nodes and weights of the corresponding Gaussian quadrature rule.

Let $\{x_j\}_{j=0}^{n}$ be the $n+1$ roots of the polynomial $\pi_{n+1}(x)$ and $k_{n}$ be the leading coefficient of $\pi_n(x)$. The corresponding Gaussian quadrature rule is
\begin{equation*}
\int_{a}^{b}\omega(x)f(x)dx\simeq\sum_{j=0}^{n}w_jf(x_j),
\end{equation*}
where the Gaussian quadrature weights are given by
\cite[p.~97]{davis1984numericalintegration}
\begin{equation}\label{eq:wj_gauss}
w_j=\frac{k_{n+1}h_n}{k_n\pi_{n+1}'(x_j)\pi_{n}(x_j)},
\end{equation}
with $h_n$ defined as in \eqref{eq:orthog relation}. When the roots $x_j$ are used for interpolation purposes, the corresponding barycentric weights can be written in the following form
\begin{equation}\label{eq:weight_derivative}
\lambda_j=\frac{k_{n+1}}{\pi_{n+1}'(x_j)}.
\end{equation}
Combining this with~\eqref{eq:wj_gauss} leads to
\begin{equation}\label{eq:pi_n_factor}
\lambda_j=\frac{k_n}{h_n}\pi_n(x_j)w_j.
\end{equation}
This relation between barycentric weights and Gaussian weights does not immediately lead to faster computations, as one still has to evaluate the orthogonal polynomial $\pi_n$ in all the nodes $x_j$. Based, for example, on recurrence relations for the polynomials, this step still requires $\mathcal{O}(n^2)$ operations.

The basic observation underlying the remainder of this paper is that for the classical polynomials $\pi_n(x_j)$ can be written in terms of $\pi_{n+1}'(x_j)$. Equation~\eqref{eq:weight_derivative} can be used again to remove the $\pi_n(x_j)$ factor from \eqref{eq:pi_n_factor}, and as a result the barycentric weights are only related to the nodes and weights of corresponding Gaussian quadrature rule.

We prepare the setting and establish notation with the following lemma.

\begin{lemma}\label{lem:hypergeometric}
Let $\pi_n(x)$ satisfy the equation of hypergeometric type
\begin{equation}\label{eq:hyper diff equation}
\varphi(x)\pi_n{''}(x)+\phi(x)\pi_n{'}(x)+\nu_n\pi_n(x)=0,
\end{equation}
where $\varphi(x)$ and $\phi(x)$ are polynomials of at most second
and first degree respectively. When
$\nu_n=-n\phi{'}(x)-\frac{n(n-1)}{2}\varphi{''}(x)$, the above
equation has a particular solution $\pi_n(x)$ which is a polynomial
of degree $n$, and all orders of the derivatives of $\pi_n(x)$ have
the following Rodrigues formula
\begin{equation}\label{eq:rodrigues formula}
\pi_n^{(m)}(x)=\frac{A_{mn}B_n}{\varphi^{m}(x)\omega(x)}\frac{d^{n-m}}{dx^{n-m}}[\varphi^n(x)\omega(x)],
\end{equation}
where
\begin{equation*}
A_{mn}=\frac{n!}{(n-m)!}\prod_{k=0}^{m-1}\left(\phi'(x)+\frac{1}{2}(n+k-1)\varphi{''}(x)\right),
\quad A_{0n}=1,
\end{equation*}
and $B_n$ is a normalizing constant
\begin{equation}\label{eq:constant B}
B_n=k_n\prod_{k=0}^{n-1}\left(\phi'(x)+\frac{1}{2}(n+k-1)\varphi{''}(x)\right)^{-1},\quad
B_0=k_0.
\end{equation}
The function $\omega(x)$ is chosen such that
\begin{equation}\label{eq:weight equation}
(\varphi(x)\omega(x))'=\phi(x)\omega(x).
\end{equation}
Moreover, $\pi_n(x)$ is orthogonal with respect to the function
$\omega(x)$.
\end{lemma}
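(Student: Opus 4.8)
The plan is to treat this as a classical computation that organizes the well-known theory of equations of hypergeometric type (in the style of Nikiforov–Uvarov). First I would observe that the weight equation~\eqref{eq:weight equation}, $(\varphi\omega)' = \phi\omega$, is a first-order linear ODE that determines $\omega$ up to a constant; its solvability and the fact that it produces the classical weights (Jacobi, Laguerre, Hermite depending on $\deg\varphi$) is standard. The key structural point is that if $\pi_n$ solves~\eqref{eq:hyper diff equation}, then its derivatives $\pi_n^{(m)}$ solve an analogous equation of the same type. Differentiating~\eqref{eq:hyper diff equation} once and setting $y = \pi_n'$ gives $\varphi y'' + (\varphi' + \phi) y' + (\phi' + \nu_n) y = 0$, which is again of hypergeometric type with $\phi$ replaced by $\phi_1 = \varphi' + \phi$ and a new eigenvalue. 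Iterating, $\pi_n^{(m)}$ satisfies an equation with coefficient $\phi_m = m\varphi' + \phi$ and the corresponding weight is $\omega_m = \varphi^m \omega$, since $(\varphi\omega_m)' = (\varphi^{m+1}\omega)' = \phi_m \omega_m$ follows directly from~\eqref{eq:weight equation} by the product rule.

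Next I would establish the Rodrigues-type representation~\eqref{eq:rodrigues formula} by downward induction on $m$, starting from $m=n$. For $m=n$, $\pi_n^{(n)} = n!\,k_n$ is constant and~\eqref{eq:rodrigues formula} reduces to the definition of the normalizing constant $B_n$ in~\eqref{eq:constant B}; one checks the two claimed products telescope correctly so that $A_{nn}B_n = n!\,k_n$. For the inductive step, suppose $\pi_n^{(m+1)} = \dfrac{A_{m+1,n}B_n}{\varphi^{m+1}\omega}\,\dfrac{d^{n-m-1}}{dx^{n-m-1}}[\varphi^n\omega]$. Since $\pi_n^{(m+1)}$ is the derivative of $\pi_n^{(m)}$, and $\pi_n^{(m)}$ solves its hypergeometric equation with weight $\omega_m = \varphi^m\omega$, I would use the self-adjoint (Sturm–Liouville) form $(\varphi\omega_m\,(\pi_n^{(m)})')' = -\nu_{m,n}\,\omega_m\,\pi_n^{(m)}$, i.e. $(\omega_{m+1}\,\pi_n^{(m+1)})' = -\nu_{m,n}\,\omega_m\,\pi_n^{(m)}$. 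Rearranging gives $\pi_n^{(m)} = -\dfrac{1}{\nu_{m,n}\,\omega_m}\big(\omega_{m+1}\pi_n^{(m+1)}\big)'$; substituting the inductive formula for $\pi_n^{(m+1)}$ and differentiating $\varphi^n\omega = \omega_{n}\cdot(\text{stuff})$ collapses one derivative and yields~\eqref{eq:rodrigues formula} at level $m$, provided the constants match. Tracking the constant is exactly where $-\nu_{m,n} = \phi' + \frac12(n+m-1)\varphi''$ enters, which I would verify from $\nu_n = -n\phi' - \frac{n(n-1)}{2}\varphi''$ applied at the $m$-th level (with $n \to n-m$ shifts in the degree but the eigenvalue expressed in the original data), confirming the product $\prod_{k=0}^{m-1}(\phi' + \frac12(n+k-1)\varphi'')$ in $A_{mn}$.

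Finally, orthogonality of $\pi_n$ with respect to $\omega$ follows from the $m=0$ Rodrigues formula $\pi_n = B_n\,\omega^{-1}\,\dfrac{d^n}{dx^n}[\varphi^n\omega]$: for any polynomial $q$ of degree $< n$, integrate $\int_a^b \omega\, \pi_n\, q = B_n \int_a^b q\,\dfrac{d^n}{dx^n}[\varphi^n\omega]\,dx$ by parts $n$ times, with all boundary terms vanishing because $\varphi^n\omega$ and its derivatives vanish at the endpoints $a,b$ (this is the defining property of the classical weights and the reason $(a,b)$ is the natural interval), leaving $(-1)^n B_n \int_a^b q^{(n)}\,\varphi^n\omega\,dx = 0$. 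The main obstacle is bookkeeping: carrying the constants $A_{mn}$, $B_n$ and the eigenvalues $\nu_{m,n}$ through the downward induction without sign or index errors, and handling the boundary terms uniformly across the finite-interval (Jacobi), half-line (Laguerre) and full-line (Hermite) cases — the analytic input there being the decay of $\varphi^n\omega$ at the endpoints, which holds for all classical weights.
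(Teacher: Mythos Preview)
Your proposal is correct and follows precisely the classical Nikiforov--Uvarov argument. Note, however, that the paper does not actually prove this lemma: its entire ``proof'' is a one-line citation to \cite[p.~24]{nikiforov1988specialfunctions}, where the argument you sketch (differentiating to show that $\pi_n^{(m)}$ satisfies a hypergeometric equation with weight $\varphi^m\omega$, then using the self-adjoint form $(\omega_{m+1}\pi_n^{(m+1)})' = -\nu_{m,n}\,\omega_m\,\pi_n^{(m)}$ to run a downward induction on $m$, and finally integrating by parts for orthogonality) is carried out in full. Your verification that $A_{m+1,n}/A_{mn} = -\nu_{m,n} = (n-m)\bigl(\phi' + \tfrac12(n+m-1)\varphi''\bigr)$ is exactly the bookkeeping required, and your identification of the boundary-term issue as the only genuinely analytic input is accurate.
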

\begin{proof}
See \cite[p. 24]{nikiforov1988specialfunctions}.
\end{proof}

Note that the polynomials satisfying an equation of the form \eqref{eq:hyper diff equation} with the right conditions are precisely the classical orthogonal polynomials: Jacobi polynomials (which include Legendre, Chebyshev and Gegenbauer polynomials), Laguerre polynomials and Hermite polynomials. The following theorem gives the general relation between barycentric weights and Gaussian quadrature.

\begin{theorem}\label{th:weights}
Let $\pi_n(x)$ satisfy the above conditions. Then the barycentric
weights $\lambda_j$ corresponding to the roots of $\pi_{n+1}(x)$ are
given by
\begin{equation}\label{eq:weights_classical}
\lambda_j=\sigma (-1)^j\sqrt{ \frac{ k_{n+1}^2(2n+2) \varphi(x_j)
w_j}{\nu_{2n+2}h_{n+1}}},\quad j=0,1,\ldots,n,
\end{equation}
where $\sigma=+1$ for even $n$ and $\sigma=-1$ for odd $n$.
\end{theorem}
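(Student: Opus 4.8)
The plan is to combine the two expressions for $\lambda_j$ already derived and thereby reduce the theorem to a single identity of \emph{lowering‑operator} type relating $\pi_n$, $\pi_{n+1}'$ and $\varphi$ at the nodes $x_j$. Starting from \eqref{eq:weight_derivative}, $\lambda_j=k_{n+1}/\pi_{n+1}'(x_j)$, and from \eqref{eq:pi_n_factor}, $\lambda_j=(k_n/h_n)\pi_n(x_j)w_j$, I would multiply the two to obtain
\[
\lambda_j^2=\frac{k_n k_{n+1}}{h_n}\,\frac{\pi_n(x_j)}{\pi_{n+1}'(x_j)}\,w_j .
\]
Hence the theorem is equivalent to the single claim that at every zero $x_j$ of $\pi_{n+1}$,
\[
\varphi(x_j)\,\pi_{n+1}'(x_j)=D_n\,\pi_n(x_j),\qquad D_n:=\frac{k_n\,\nu_{2n+2}\,h_{n+1}}{k_{n+1}\,(2n+2)\,h_n},
\]
together with a determination of the sign of $\lambda_j$.

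To prove this identity I would first show that $\varphi\pi_{n+1}'$ is a combination of $\pi_n,\pi_{n+1},\pi_{n+2}$. Since $\deg(\varphi\pi_{n+1}')\le n+2$ it is a linear combination of $\pi_0,\dots,\pi_{n+2}$; and for $0\le k\le n-1$, integrating by parts and using the weight equation \eqref{eq:weight equation} together with the vanishing of $\varphi\omega$ at the endpoints gives $\int_a^b\omega\,\varphi\pi_{n+1}'\pi_k\,dx=-\int_a^b\omega\,\pi_{n+1}\,(\phi\pi_k+\varphi\pi_k')\,dx=0$, because $\deg(\phi\pi_k+\varphi\pi_k')\le k+1\le n$. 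Write $\varphi\pi_{n+1}'=c_{n+2}\pi_{n+2}+c_{n+1}\pi_{n+1}+c_n\pi_n$. Comparing leading coefficients gives $c_{n+2}=(n+1)\varphi'' k_{n+1}/(2k_{n+2})$, and the same integration‑by‑parts identity with $k=n$, after extracting the coefficient of $x^{n+1}$ in $\phi\pi_n+\varphi\pi_n'$ (which equals $k_n(\phi'+\tfrac{n}{2}\varphi'')=-k_n\nu_{n+1}/(n+1)$), gives $c_n=k_n\nu_{n+1}h_{n+1}/\bigl((n+1)k_{n+1}h_n\bigr)$. (The Rodrigues formula of Lemma~\ref{lem:hypergeometric} combined with the hypergeometric equation \eqref{eq:hyper diff equation} provides an alternative route to the same coefficients.)

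Next I would eliminate $\pi_{n+2}$ at the nodes by the three‑term recurrence, which gives $\pi_{n+2}(x_j)=\gamma_{n+2}\pi_n(x_j)$ at a zero $x_j$ of $\pi_{n+1}$, with $\gamma_{n+2}=-k_{n+2}k_n h_{n+1}/(k_{n+1}^2 h_n)$. Then
\[
\varphi(x_j)\pi_{n+1}'(x_j)=(c_n+c_{n+2}\gamma_{n+2})\,\pi_n(x_j)=\frac{k_n h_{n+1}}{k_{n+1} h_n}\Bigl(\frac{\nu_{n+1}}{n+1}-\frac{(n+1)\varphi''}{2}\Bigr)\pi_n(x_j),
\]
and a short computation with $\nu_m=-m\phi'-\tfrac{m(m-1)}{2}\varphi''$ shows $\tfrac{\nu_{n+1}}{n+1}-\tfrac{(n+1)\varphi''}{2}=-\phi'-\tfrac{2n+1}{2}\varphi''=\tfrac{\nu_{2n+2}}{2n+2}$, so the bracket equals $\nu_{2n+2}/(2n+2)$ and the constant is exactly $D_n$. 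Substituting $\pi_n(x_j)/\pi_{n+1}'(x_j)=\varphi(x_j)/D_n$ into the displayed formula for $\lambda_j^2$ gives $\lambda_j^2=k_{n+1}^2(2n+2)\varphi(x_j)w_j/(\nu_{2n+2}h_{n+1})$. For the sign: the radicand is positive, since $w_j>0$, $\varphi>0$ on the interior of $(a,b)$ for every classical family, and $\nu_{2n+2}>0$ in all cases; and, ordering the nodes $x_0<\dots<x_n$, one has $\pi_{n+1}'(x_j)=k_{n+1}\prod_{i\ne j}(x_j-x_i)$, whose sign is that of $k_{n+1}(-1)^{n-j}$, so $\lambda_j=k_{n+1}/\pi_{n+1}'(x_j)$ has sign $(-1)^{n-j}=(-1)^n(-1)^j=\sigma(-1)^j$, as asserted.

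The routine part is the algebra; the step where the classical nature of $\pi_n$ is genuinely used — and which I expect to be the main obstacle — is showing that $\varphi\pi_{n+1}'$ reduces to a three‑term combination and that, after eliminating $\pi_{n+2}$ via the recurrence at the nodes, it collapses to a multiple of $\pi_n$ with the constant coming out precisely as $D_n$. One should also check that the degenerate cases $\varphi''=0$ (Laguerre, Hermite) are covered uniformly (then $c_{n+2}=0$ and the structure relation already has only two terms) and that the boundary terms in the integration by parts vanish for every classical weight.
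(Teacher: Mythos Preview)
Your proof is correct, and your determination of the sign $\sigma(-1)^j$ is actually more explicit than the paper's. However, you reach the key identity $\varphi(x_j)\pi_{n+1}'(x_j)=D_n\pi_n(x_j)$ by a genuinely different route.

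The paper works via the Rodrigues formula of Lemma~\ref{lem:hypergeometric}: it expands $\pi_{n+1}$ through \eqref{eq:rodrigues formula}, differentiates once to pull out a factor $\phi_n=\phi+n\varphi'$, and obtains directly a relation of the form $\varphi\pi_n'=A\pi_{n+1}+B\pi_n$ (equation~\eqref{eq:lowering step one}); the three-term recurrence then replaces $\pi_{n+1}$ by $\pi_n,\pi_{n-1}$, giving the explicit lowering operator~\eqref{eq:lowering operator}, which after the shift $n\mapsto n+1$ is evaluated at the zeros of $\pi_{n+1}$. Your route instead establishes the structure relation $\varphi\pi_{n+1}'\in\mathrm{span}\{\pi_n,\pi_{n+1},\pi_{n+2}\}$ by orthogonality and integration by parts (using only the Pearson equation~\eqref{eq:weight equation}), computes the two outer coefficients, and then eliminates $\pi_{n+2}$ at the nodes through the recurrence. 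Your argument is arguably more elementary---it never invokes the Rodrigues representation---and handles the degenerate cases $\varphi''=0$ (Laguerre, Hermite) uniformly, as you note. The paper's approach, on the other hand, produces the full lowering operator~\eqref{eq:lowering operator} valid for all $x$, not merely at the nodes, which is of independent interest and connects the result explicitly to the ladder-operator literature cited in the discussion after the theorem.
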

\begin{proof}
Set $\phi_n(x)=\phi(x)+n\varphi'(x)$. It then follows from
\eqref{eq:weight equation} that
\begin{equation}
(\varphi(x)^{n+1}\omega(x)){'}=\phi_n(x)\varphi(x)^n\omega(x).
\end{equation}
Applying the Rodrigues formula \eqref{eq:rodrigues formula} and
noting that $\phi_n(x)$ is a polynomial of degree one, we have
\begin{align*}
\pi_{n+1}(x)&=\frac{B_{n+1}}{\omega(x)}\frac{d^{n+1}}{dx^{n+1}}[\varphi^{n+1}(x)\omega(x)]\nonumber
\\
&=\frac{B_{n+1}}{\omega(x)}\frac{d^{n}}{dx^{n}}[\phi_n(x)\varphi(x)^n\omega(x)]\nonumber
\\
&=\frac{B_{n+1}}{\omega(x)}[\phi_n(x)(\varphi(x)^n\omega(x))^{(n)}+n\phi_n{'}(x)(\varphi(x)^n\omega(x))^{(n-1)}]\nonumber
\\
&=\frac{B_{n+1}}{B_n}[\phi_n(x)\pi_n(x)+\frac{n\phi_n{'}(x)}{A_{1n}}\varphi(x)\pi_n{'}(x)].
\end{align*}
Consequently,
\begin{equation}\label{eq:lowering step one}
\varphi(x)\pi_n{'}(x)=\frac{A_{1n}}{n\phi_n{'}(x)}\left[\frac{B_n}{B_{n+1}}\pi_{n+1}(x)-\phi_n(x)\pi_n(x)\right].
\end{equation}
Recall that $\pi_n(x)$ is orthogonal with respect to the function
$\omega(x)$. It has the three-term recurrence relation
\begin{align}\label{eq:three term recur}
x\pi_n(x)=\alpha_n\pi_{n+1}(x)+\beta_n\pi_n(x)+\gamma_n\pi_{n-1}(x),
\end{align}
where $\alpha_n$, $\beta_n$ and $\gamma_n$ are constants. Using the
orthogonality of $\pi_n(x)$, we can easily find that
\begin{align}
\alpha_n=\frac{k_n}{k_{n+1}}, \quad
\gamma_n=\frac{h_nk_{n-1}}{k_nh_{n-1}}.
\end{align}
Combining \eqref{eq:lowering step one} and \eqref{eq:three term
recur} yields
\begin{equation}\label{eq:lowering step two}
\varphi(x)\pi_n{'}(x)=\frac{A_{1n}}{n\phi_n{'}(x)}\left[\frac{B_n}{B_{n+1}\alpha_n}(x-\beta_n)-\phi_n(x)\right]\pi_n(x)-\frac{A_{1n}B_n\gamma_n}{n\phi_n{'}(x)B_{n+1}\alpha_n}\pi_{n-1}(x).
\end{equation}
Furthermore, taking the definition \eqref{eq:constant B} into
account and using the fact that
\begin{equation*}
A_{1n}=n(\phi'(x)+\frac{n-1}{2}\varphi{''})=-\nu_n, \quad
\phi_n{'}(x)=\phi{'}(x)+n\varphi{''}(x)=-\frac{\nu_{2n+1}}{2n+1},
\end{equation*}
we obtain
\begin{equation}\label{eq:lowering operator}
\varphi(x)\pi_n{'}(x)=-\frac{(2n+1)\nu_n}{n\nu_{2n+1}}\left[\frac{\nu_{2n}\nu_{2n+1}}{2\nu_n(2n+1)}(x-\beta_n)+\phi_n(x)\right]\pi_n(x)+\frac{\nu_{2n}h_nk_{n-1}}{2nk_nh_{n-1}}\pi_{n-1}(x).
\end{equation}
Let $x_j$, $j=0,\ldots,n$, be the roots of $\pi_{n+1}(x)$. Replacing
$n$ with $n+1$ in \eqref{eq:lowering operator}, it follows that
\[
 \varphi(x_j) \pi_{n+1}'(x_j) = \frac{\nu_{2n+2}k_{n}h_{n+1}}{(2n+2)k_{n+1}h_{n}}\pi_n(x_j).
\]
This, together with \eqref{eq:weight_derivative}
and~\eqref{eq:pi_n_factor}, implies that
\[
\lambda_j^2 = \frac{ k_{n+1}^2(2n+2) \varphi(x_j)
w_j}{\nu_{2n+2}h_{n+1}}.
\]
Recalling that the barycentric weights $\lambda_j$ have alternating
signs, expression~\eqref{eq:weights_classical} follows.
\end{proof}

We make some further comments regarding Theorem \ref{th:weights} and its proof, in order to put the result itself, as well as its scope and limitations, in a proper context:
\begin{itemize}
\item The crucial identity that relates $\pi_n'$ to $\pi_n$ and $\pi_{n-1}$ is given by expression \eqref{eq:lowering operator}.
This is an example of a so-called \emph{lowering operator}. Indeed, moving $\pi_n$ to the left hand side in \eqref{eq:lowering operator}
defines an operator that acts on $\pi_n$ and that yields a polynomial of lower degree -- hence the name. We have included a typical
derivation of the lowering operator in the proof for the purpose of being self-contained. A classical reference is \cite{nikiforov1988specialfunctions}.

\item  The derivatives of classical polynomials $\pi_n^{(k)}$ are again orthogonal on the same interval with respect
to the new weight function $\varphi(x)^k\omega(x)$ \cite{nikiforov1988specialfunctions}
. Thus if the interpolation points are roots of $\pi_n^{(k)}$, $n >
k$, then \emph{lowering operators} can also be used to write the
barycentric weights in terms of the nodes and weights of the
Gaussian quadrature with respect to the new weight function.

\item More general lowering operators can be found for other kinds of orthogonal polynomials as well, leading to identities similar to \eqref{eq:lowering operator}. Examples include polynomials orthogonal with respect to the weight function $e^{-V(x)}$ on $[-1,1]$, where $V(x)$ is a polynomial \cite[Eq. (1.5)]{chen1997ladder}, and polynomials orthogonal with respect to the weight function $w(x) e^{-V(x)}$, where $w(x)$ is any of the classical weight functions \cite[Eq. (6.5)]{tracy1994fredholm}. Any such identity immediately gives rise to a relationship between barycentric weights and Gaussian quadrature weights.

\item However, it is important to point out that the lowering operator in general depends on $n$. This dependence is benign in the setting of Theorem \ref{th:weights}, in the sense that all $n$-dependent quantities still appearing in the final result \eqref{eq:weights_classical} have explicit expressions (which we supply further on in \S\ref{ss:explicit}). In particular, these quantities can be evaluated in a number of operations that does not depend on $n$. Unfortunately, this is no longer the case for the lowering operators in \cite{chen1997ladder,tracy1994fredholm}.

\item Furthermore, the algorithms for the fast construction of Gaussian quadrature rules are only applicable for the classical orthogonal polynomials \cite{glaser2007roots,hale2012fastgauss,bogaert2012legendre}. Thus, an ${\mathcal O}(n)$ algorithm for the computation of barycentric weights from the Gaussian weights can not be immediately generalized to other polynomials.
\end{itemize}
The last two comments are the two reasons for restricting ourselves to the classical orthogonal polynomials in Lemma \ref{lem:hypergeometric} and Theorem \ref{th:weights} above.

\subsection{Explicit barycentric weights for the classical orthogonal polynomials}\label{ss:explicit}

As mentioned above, polynomials satisfying an equation of the form~\eqref{eq:hyper diff equation} are the classical polynomials. For Jacobi polynomials, we have
\cite[p.~61]{szego1939polynomials}
\begin{equation}\label{eq:id_jacobi1}
(1-x^2)P_{n}^{(\alpha,\beta)}{''}(x)+[\beta-\alpha-(\alpha+\beta+2)x]P_{n}^{(\alpha,\beta)}{'}(x)+n(n+\alpha+\beta+1)P_{n}^{(\alpha,\beta)}(x)=0,
\end{equation}
%and
%\begin{align}\label{eq:id_jacobi2}
%xP_{n}^{(\alpha,\beta)}(x)&=\frac{2(n+1)(n+\alpha+\beta+1)}{(2n+\alpha+\beta+1)(2n+\alpha+\beta+2)}P_{n+1}^{(\alpha,\beta)}(x)\nonumber
%\\
%&~~~~~-\frac{\alpha^2-\beta^2}{(2n+\alpha+\beta)(2n+\alpha+\beta+2)}P_{n}^{(\alpha,\beta)}(x)\nonumber
%\\
%&~~~~~+\frac{2(n+\alpha)(n+\beta)}{(2n+\alpha+\beta)(2n+\alpha+\beta+1)}P_{n-1}^{(\alpha,\beta)}(x)
%\end{align}
from which we obtain
\begin{equation}\label{eq:u_and_C}
\varphi(x)=1-x^2, \quad \nu_n=n(n+\alpha+\beta+1).
\end{equation}
Substituting these into \eqref{eq:weights_classical} and recalling
\begin{align*}
k_{n+1}=\frac{1}{2^{n+1}}\binom{2n+\alpha+\beta+2}{n+1}, \quad
h_{n+1}=\frac{2^{\alpha+\beta+1}}{2n+\alpha+\beta+3}\frac{\Gamma(n+\alpha+2)\Gamma(n+\beta+2)}{(n+1)! \, \Gamma(n+\alpha+\beta+2)},
\end{align*}
we obtain the barycentric weights. Cancelling the common factors
yields the simplified weights. In the next three corollaries,
$\sigma$ is defined as in Theorem~\ref{th:weights}.

\begin{corollary}\label{cor:jacobi}
 The barycentric weights for the roots of the Jacobi polynomial $P_{n+1}^{(\alpha,\beta)}(x)$ are
\begin{equation}\label{eq:weights_jacobi}
\lambda_j^{(\alpha,\beta)} = C^{(\alpha,\beta)}_n \, (-1)^j \,
\sqrt{(1-x_j^2)w_j}, \qquad j=0,\ldots,n,
\end{equation}
where $C^{(\alpha,\beta)}_n = 1$ for the simplified weights, and
otherwise
\begin{equation}\label{eq:jacobi common factor}
C^{(\alpha,\beta)}_n = \sigma
\frac{\Gamma(2n+\alpha+\beta+3)}{2^{n+1+\frac{\alpha+\beta+1}{2}}}
\frac{1}{\sqrt{(n+1)! \,
\Gamma(n+\alpha+\beta+2)\Gamma(n+\alpha+2)\Gamma(n+\beta+2)}}.
\end{equation}
\end{corollary}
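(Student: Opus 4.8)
The plan is to specialize Theorem~\ref{th:weights} to the Jacobi case and then simplify the resulting constant. From \eqref{eq:id_jacobi1}--\eqref{eq:u_and_C} we have $\varphi(x)=1-x^2$ and $\nu_n=n(n+\alpha+\beta+1)$, so $\varphi(x_j)=1-x_j^2$ and $\nu_{2n+2}=(2n+2)(2n+\alpha+\beta+3)$. The first thing to notice is that the factor $2n+2$ in the numerator of \eqref{eq:weights_classical} cancels against the same factor inside $\nu_{2n+2}$, leaving
\[
\lambda_j^2=\frac{k_{n+1}^2\,(1-x_j^2)\,w_j}{(2n+\alpha+\beta+3)\,h_{n+1}}.
\]

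Next I would substitute the stated closed forms of $k_{n+1}$ and $h_{n+1}$. It is convenient to rewrite the binomial coefficient via Gamma functions, $\binom{2n+\alpha+\beta+2}{n+1}=\Gamma(2n+\alpha+\beta+3)\big/\big((n+1)!\,\Gamma(n+\alpha+\beta+2)\big)$, so that $k_{n+1}^2$ contributes $\Gamma(2n+\alpha+\beta+3)^2$ to the numerator and $2^{2n+2}\,((n+1)!)^2\,\Gamma(n+\alpha+\beta+2)^2$ to the denominator. The factor $2n+\alpha+\beta+3$ in the denominator of $\lambda_j^2$ cancels exactly against the same factor in the denominator of $h_{n+1}$, and the reciprocal of what is left of $h_{n+1}$ supplies $(n+1)!\,\Gamma(n+\alpha+\beta+2)$ in the numerator and $2^{\alpha+\beta+1}\,\Gamma(n+\alpha+2)\,\Gamma(n+\beta+2)$ in the denominator. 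Collecting powers of two as $2^{2n+2+\alpha+\beta+1}=\big(2^{\,n+1+\frac{\alpha+\beta+1}{2}}\big)^2$ and cancelling one copy of $(n+1)!\,\Gamma(n+\alpha+\beta+2)$ against one of the two copies in the denominator, one arrives at
\[
\lambda_j^2=\frac{\Gamma(2n+\alpha+\beta+3)^2}{2^{\,2n+2+\alpha+\beta+1}}\cdot\frac{(1-x_j^2)\,w_j}{(n+1)!\,\Gamma(n+\alpha+\beta+2)\,\Gamma(n+\alpha+2)\,\Gamma(n+\beta+2)}.
\]
Taking the square root and restoring the sign $\sigma(-1)^j$ inherited from Theorem~\ref{th:weights} yields \eqref{eq:weights_jacobi} with $C^{(\alpha,\beta)}_n$ exactly as in \eqref{eq:jacobi common factor}.

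Finally, for the simplified weights I would note that $C^{(\alpha,\beta)}_n$ does not depend on $j$; a common factor in all the $\lambda_j$ appears in both the numerator and the denominator of the barycentric formula \eqref{eq:bary_form} and may therefore be dropped, which amounts to setting $C^{(\alpha,\beta)}_n=1$. There is no genuine obstacle here: the entire argument is a direct substitution into \eqref{eq:weights_classical}, and the only real effort is the bookkeeping of the Gamma-function and power-of-two factors — in particular spotting the two $n$-dependent cancellations ($2n+2$ against $\nu_{2n+2}$, and $2n+\alpha+\beta+3$ against the denominator of $h_{n+1}$), after which the remaining algebra is routine.
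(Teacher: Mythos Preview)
Your proposal is correct and follows exactly the approach the paper takes: specialize Theorem~\ref{th:weights} by inserting $\varphi(x)=1-x^2$, $\nu_{2n+2}=(2n+2)(2n+\alpha+\beta+3)$, and the stated values of $k_{n+1}$ and $h_{n+1}$, then simplify. The paper merely says ``substituting these into \eqref{eq:weights_classical} \ldots\ we obtain the barycentric weights'' without writing out the algebra, so your version is in fact more detailed than the original but otherwise identical in strategy.
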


The case of Legendre polynomials corresponds to the choice
$\alpha=\beta=0$. Formula \eqref{eq:weights_jacobi} indeed
corresponds to expression \eqref{eq:weights_legendre}, which was
observed earlier in \cite{wang2012legendre}, for the case of the
simplified weights. Note that the formula for the simplified weights
remains unchanged for the general Jacobi case.

Similarly, the Laguerre polynomials satisfy the following
differential equation \cite[p.~100]{szego1939polynomials}
\begin{equation*}
xL_{n}^{(\alpha)}{''}(x)+(1+\alpha-x)L_{n}^{(\alpha)}{'}(x)+nL_{n}^{(\alpha)}(x)=0,
\end{equation*}
and
\begin{equation*}
k_{n+1}=\frac{(-1)^{n+1}}{(n+1)!}, \quad
h_{n+1}=\frac{\Gamma(n+\alpha+2)}{(n+1)!}.
\end{equation*}
These lead to the following expressions.

\begin{corollary}\label{cor:laguerre}
 The barycentric weights for the roots of the Laguerre polynomial $L_{n+1}^{(\alpha)}(x)$ are
\begin{equation}\label{eq:weights_laguerre}
 \lambda_j^{\mathrm{Lag}}=C^{(\alpha)}(-1)^j\sqrt{x_jw_j}, \qquad j=0,\ldots,n,
\end{equation}
where $C^{(\alpha)}=1$ for the simplified weights, and otherwise
\begin{equation}\label{eq:lag common factor}
 C^{(\alpha)}=\sigma \, \frac{1}{\sqrt{\Gamma(n+\alpha+2)(n+1)!}}.
\end{equation}
\end{corollary}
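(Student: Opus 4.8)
The plan is to specialize Theorem~\ref{th:weights} to the Laguerre case: read off $\varphi$ and $\nu_n$ from the stated differential equation, substitute the given $k_{n+1}$ and $h_{n+1}$ into~\eqref{eq:weights_classical}, and simplify.

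First I would compare the Laguerre equation $xL_n^{(\alpha)''}(x)+(1+\alpha-x)L_n^{(\alpha)'}(x)+nL_n^{(\alpha)}(x)=0$ with the equation of hypergeometric type~\eqref{eq:hyper diff equation} to identify $\varphi(x)=x$, $\phi(x)=1+\alpha-x$ and $\nu_n=n$; in particular $\nu_{2n+2}=2n+2$. Substituting $\varphi(x_j)=x_j$, $\nu_{2n+2}=2n+2$, $k_{n+1}^2=1/((n+1)!)^2$ and $h_{n+1}=\Gamma(n+\alpha+2)/(n+1)!$ into~\eqref{eq:weights_classical}, the factor $2n+2$ in the numerator cancels against $\nu_{2n+2}$ in the denominator, and the remaining constant collapses to $1/\big((n+1)!\,\Gamma(n+\alpha+2)\big)$. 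This gives
\[
 \lambda_j = \sigma(-1)^j\sqrt{\frac{x_j w_j}{(n+1)!\,\Gamma(n+\alpha+2)}},
\]
which is precisely~\eqref{eq:weights_laguerre} with the constant~\eqref{eq:lag common factor}.

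For the simplified weights I would then note that both $\sigma$ and the factor $1/\sqrt{(n+1)!\,\Gamma(n+\alpha+2)}$ are independent of $j$, hence they are common factors of all the $\lambda_j$ and may be dropped without altering the value of the barycentric formula~\eqref{eq:bary_form}; this leaves $C^{(\alpha)}=1$. I do not expect any genuine obstacle here, since the argument is a direct substitution into Theorem~\ref{th:weights}; the only points needing a little care are the cancellation of the $2n+2$ factor and keeping track of which quantities are truly $j$-independent (and hence removable in passing to the simplified weights).
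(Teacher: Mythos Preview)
Your proposal is correct and follows exactly the paper's approach: the paper simply records the Laguerre differential equation, reads off $\varphi(x)=x$ and $\nu_n=n$, lists $k_{n+1}$ and $h_{n+1}$, and then states that ``these lead to the following expressions,'' which is precisely the substitution into~\eqref{eq:weights_classical} that you carry out explicitly. Your write-up is in fact more detailed than the paper's, which leaves the algebraic simplification to the reader.
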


Finally, the Hermite polynomials satisfy \cite[p.~106]{szego1939polynomials}
\begin{equation*}
H_{n}{''}(x)-2xH_n{'}(x)+2nH_n(x)=0,
\end{equation*}
from which we deduce that
\begin{equation*}
 \phi(x) = 1, \qquad \nu_n = 2n.
\end{equation*}
Moreover, using the fact that
\begin{equation*}
k_{n+1}=2^{n+1}, \quad h_{n+1}=\sqrt{\pi}2^{n+1}(n+1)!,
\end{equation*}
we obtain the following corollary.

\begin{corollary}\label{cor:hermite}
 The barycentric weights for the roots of the Hermite polynomial $H_{n+1}(x)$ are
\begin{equation}\label{eq:weights_hermite}
 \lambda_j^{\mathrm{H}}=C \, (-1)^j \, \sqrt{w_j}, \qquad j=0,\ldots,n,
\end{equation}
where $C=1$ for the simplified weights, and otherwise
\begin{equation*}
 C=\sigma \sqrt{\frac{2^n}{(n+1)!\sqrt{\pi}}}.
\end{equation*}
\end{corollary}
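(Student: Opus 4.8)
The plan is to obtain Corollary \ref{cor:hermite} as a direct specialization of Theorem \ref{th:weights} to the Hermite weight, so the whole argument is bookkeeping once the relevant Hermite data are assembled. First I would check that the Hermite polynomials fit the hypotheses of Lemma \ref{lem:hypergeometric}: from $H_n''(x)-2xH_n'(x)+2nH_n(x)=0$ one reads off $\varphi(x)=1$, $\phi(x)=-2x$, $\nu_n=2n$, and the consistency condition $\nu_n=-n\phi'(x)-\tfrac{n(n-1)}{2}\varphi''(x)=2n$ holds. The weight solving $(\varphi\omega)'=\phi\omega$, i.e. $\omega'=-2x\omega$, is $\omega(x)=e^{-x^2}$ on $(-\infty,\infty)$, so the $w_j$ in \eqref{eq:weights_classical} are exactly the Gauss--Hermite weights.

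Second, I would collect the two normalization constants appearing in \eqref{eq:weights_classical}, namely the leading coefficient $k_{n+1}=2^{n+1}$ of $H_{n+1}$ and $h_{n+1}=\sqrt{\pi}\,2^{n+1}(n+1)!$, the latter coming from $\int_{-\infty}^{\infty}e^{-x^2}H_m(x)H_n(x)\,dx=\sqrt{\pi}\,2^n n!\,\delta_{mn}$. Third, I would substitute into \eqref{eq:weights_classical}. Since $\varphi\equiv 1$ one has $\varphi(x_j)=1$ at every node, and with $\nu_{2n+2}=2(2n+2)=4(n+1)$ and $2n+2=2(n+1)$,
\[
\lambda_j^2=\frac{k_{n+1}^2(2n+2)\,\varphi(x_j)\,w_j}{\nu_{2n+2}h_{n+1}}
=\frac{2^{2n+2}\cdot 2(n+1)}{4(n+1)\cdot\sqrt{\pi}\,2^{n+1}(n+1)!}\,w_j
=\frac{2^n}{(n+1)!\sqrt{\pi}}\,w_j .
\]
Taking square roots and invoking the fact (fixed in Section \ref{s:bary}) that the $\lambda_j$ alternate in sign, with the sign at $j=0$ determined by $\sigma$ exactly as in Theorem \ref{th:weights}, yields $\lambda_j^{\mathrm H}=\sigma(-1)^j\sqrt{\frac{2^n}{(n+1)!\sqrt{\pi}}}\sqrt{w_j}$, i.e. $C=\sigma\sqrt{\frac{2^n}{(n+1)!\sqrt{\pi}}}$. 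For the simplified weights one observes that this $C$ is independent of $j$; since any common factor cancels between numerator and denominator of the barycentric formula \eqref{eq:bary_form}, it may be dropped and $C=1$.

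The main obstacle is really just the bookkeeping of powers of $2$ and of the factorials when simplifying $\lambda_j^2$, together with carrying the global sign $\sigma$ (even versus odd $n$) through unchanged; there is no genuine difficulty, and in particular no sign subtlety from the $\varphi(x_j)$ factor, since $\varphi\equiv 1>0$ automatically (in contrast to the Jacobi and Laguerre corollaries, where one additionally needs $1-x_j^2>0$, respectively $x_j>0$, at the interior nodes). As a consistency check I would also reprove the formula directly from the classical lowering relation $H_{n+1}'(x)=2(n+1)H_n(x)$: combining it with \eqref{eq:weight_derivative} and \eqref{eq:wj_gauss} (using $k_n=2^n$, $h_n=\sqrt{\pi}\,2^n n!$) eliminates $H_n(x_j)$ and again gives $\lambda_j^2=\dfrac{2^n}{(n+1)!\sqrt{\pi}}\,w_j$, confirming the result.
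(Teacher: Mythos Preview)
Your proposal is correct and follows exactly the approach the paper takes: identify the Hermite data $\varphi(x)=1$, $\nu_n=2n$, $k_{n+1}=2^{n+1}$, $h_{n+1}=\sqrt{\pi}\,2^{n+1}(n+1)!$ from the differential equation and normalization, and substitute into the general formula \eqref{eq:weights_classical} of Theorem~\ref{th:weights}. The paper merely lists these ingredients and states the corollary without displaying the arithmetic, whereas you spell out the simplification and add a direct check via $H_{n+1}'=2(n+1)H_n$; both are welcome but do not change the argument.
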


The above corollaries show a close and simple connection between the barycentric weights and the nodes and weights of the corresponding Gaussian quadrature rule for all classical orthogonal polynomials. Since the Legendre polynomials and the Chebyshev polynomials of the first and second kinds are all special cases of Jacobi polynomials, the barycentric weights for the zeros of these polynomials, given in Section~\ref{s:intro} can be derived as immediate consequence.

\section{Additional interpolation points}\label{s:radaulobatto}

A set of Gaussian quadrature points is sometimes augmented with a small set of additional points. Two useful examples are Gauss-Radau and Gauss-Lobatto rules, where one or two (respectively) of the endpoints of the integration interval are added to the set of quadrature points. The weights of the Radau and Lobatto variants can be written in terms of the weights of a regular Gaussian quadrature rule. We show that in some cases one can also write the barycentric weights in terms of the Gauss-Radau or Gauss-Lobatto quadrature weights. This setting covers the set of Chebyshev points of the second kind, i.e., the set of all maxima of the Chebyshev polynomials of the first kind on $[-1,1]$, for which explicit formulae for the barycentric weights are already known.

\subsection{Gaussian quadrature with preassigned abscissae}\label{ss:gauss_preassigned}

We start out in a more general setting in order to illustrate the scope of the arguments. We study Gaussian quadrature rules with a number of preassigned abscissae, for example the set $\{y_j\}_{j=1}^m$. We are interested in a set of $n+1$ additional quadrature points $x_j$ such that the quadrature rule
\begin{equation}\label{eq:rule_pre}
 \int_{-1}^1 w(x) f(x) dx \simeq \sum_{j=1}^m \hat{w}_j f(y_j) + \sum_{j=0}^n \tilde{w}_j f(x_j)
\end{equation}
is exact for polynomials up to degree $m+2n+1$. This is known to be
the maximal order \cite[p.~101]{davis1984numericalintegration}. It is achieved by taking
$x_j$ as the roots of an orthogonal polynomial $\pi_{n+1}$, if it
exists, that is orthogonal in the sense that
\begin{equation}\label{eq:orthogonality_pre}
 \int_{-1}^1 w(x) r_m(x) \pi_{n+1}(x) x^j dx = 0, \qquad j=0,\ldots,n,
\end{equation}
where
\[
 r_m(x) = \prod_{j=1}^m (x-y_j)
\]
is a polynomial of degree $m$ that vanishes at the preassigned quadrature nodes $y_j$. Note that the existence of this polynomial is not guaranteed, even for positive $w(x)$, if $r_m(x)$ switches sign in the integration interval.

Let $w_j$ be the set of weights corresponding to the regular
Gaussian quadrature rule associated with $\pi_{n+1}$. The weights
$\tilde{w}_j$ of the rule with preassigned nodes relate to $w_j$ in
a simple way. As the following statement is not found in the
standard textbook \cite{davis1984numericalintegration}, we include a proof.

\begin{lemma}\label{lem:preassigned}
 We have
\[
 \tilde{w}_j = \frac{w_j}{r_m(x_j)}.
\]
\end{lemma}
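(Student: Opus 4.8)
The plan is to exploit the maximal exactness degree of the quadrature rule \eqref{eq:rule_pre} by feeding it a carefully chosen test polynomial. Specifically, fix an index $j \in \{0,\ldots,n\}$ and consider the polynomial
\[
 q_j(x) = \frac{\pi_{n+1}(x)}{x - x_j} \cdot \frac{r_m(x)}{r_m(x_j)} \cdot \frac{1}{\pi_{n+1}'(x_j)}.
\]
This is a polynomial of degree exactly $n + m$, which is well within the exactness range $m + 2n + 1$, so applying \eqref{eq:rule_pre} to $q_j$ is legitimate. The key point is that $q_j$ vanishes at every preassigned node $y_i$ (because of the factor $r_m$), and it vanishes at every $x_i$ with $i \neq j$ (because of the factor $\pi_{n+1}(x)/(x-x_j)$), while $q_j(x_j) = 1$ by L'Hôpital applied to the first factor together with the normalizations. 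Hence the right-hand side of \eqref{eq:rule_pre} collapses to the single term $\tilde{w}_j$, giving
\[
 \tilde{w}_j = \int_{-1}^1 w(x)\, q_j(x)\, dx = \frac{1}{r_m(x_j)\,\pi_{n+1}'(x_j)} \int_{-1}^1 w(x)\, \frac{\pi_{n+1}(x)}{x-x_j}\, r_m(x)\, dx.
\]

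Next I would recognize the integral on the right as (a multiple of) the corresponding Gauss weight $w_j$ for the \emph{plain} Gaussian rule associated with $\pi_{n+1}$ and weight $w(x)$. Indeed, by the same collapsing argument applied to the plain rule — testing it against $\widetilde{q}_j(x) = \pi_{n+1}(x)/\big((x-x_j)\pi_{n+1}'(x_j)\big)$, a polynomial of degree $n$ hence exactly integrated — one gets $w_j = \frac{1}{\pi_{n+1}'(x_j)}\int_{-1}^1 w(x)\,\frac{\pi_{n+1}(x)}{x-x_j}\,dx$. Therefore the identity I want, $\tilde{w}_j = w_j / r_m(x_j)$, follows provided I can show
\[
 \int_{-1}^1 w(x)\, \frac{\pi_{n+1}(x)}{x-x_j}\, r_m(x)\, dx = \int_{-1}^1 w(x)\, \frac{\pi_{n+1}(x)}{x-x_j}\, dx.
\]

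The main obstacle, then, is justifying this last equality. The difference of the two integrands is $w(x)\,\dfrac{\pi_{n+1}(x)}{x-x_j}\,\big(r_m(x) - 1\big)$, which is $w(x)$ times $\dfrac{\pi_{n+1}(x)}{x-x_j}$ times a polynomial; but $r_m(x)-1$ is degree $m$ and $\pi_{n+1}(x)/(x-x_j)$ is degree $n$, so this is not obviously killed by orthogonality in an elementary way. The cleaner route is to instead write $r_m(x) = r_m(x_j) + (x - x_j)\, s(x)$ for a polynomial $s$ of degree $m-1$, so that
\[
 \frac{\pi_{n+1}(x)}{x-x_j}\, r_m(x) = r_m(x_j)\,\frac{\pi_{n+1}(x)}{x-x_j} + s(x)\,\pi_{n+1}(x).
\]
Integrating against $w(x)$, the second term contributes $\int_{-1}^1 w(x)\, s(x)\, \pi_{n+1}(x)\, dx$, and here I would invoke the orthogonality property \eqref{eq:orthogonality_pre}: that relation states $\int_{-1}^1 w(x)\, r_m(x)\,\pi_{n+1}(x)\,x^i\,dx = 0$ for $i = 0,\ldots,n$ — wait, that is orthogonality of $r_m \pi_{n+1}$, not of $\pi_{n+1}$ alone, against low-degree polynomials. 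So in fact I do not have plain orthogonality of $\pi_{n+1}$ against $s$ of degree $m-1$. This means the argument must be organized around the test polynomial $q_j$ directly rather than reducing to the plain rule; concretely, I would apply \eqref{eq:rule_pre} to $q_j$ to extract $\tilde w_j$ as above, then separately apply the \emph{plain} Gauss rule to the degree-$\le n$ part and use that the degree-$n+m$ polynomial $q_j$ differs from $r_m(x_j)^{-1}\widetilde q_j$ by $s(x)\pi_{n+1}(x)/(r_m(x_j)\pi_{n+1}'(x_j))$; the cross term $\int w\, s\, \pi_{n+1}$ need not vanish, so the honest path is: the quantity $\int_{-1}^1 w\, q_j\, dx$ is what it is, and I compare it to $\tilde w_j$ obtained from \eqref{eq:rule_pre} and to $w_j/r_m(x_j)$ obtained by noting that $r_m(x)q_j(x)/r_m(x_j) $ has the interpolatory structure of the combined rule. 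I expect the slick resolution is that $q_j$ is precisely the Lagrange-type fundamental function for the node $x_j$ within the full node set $\{y_i\}\cup\{x_i\}$ scaled appropriately, and applying \eqref{eq:rule_pre} to it is both legitimate (degree $m+n \le m+2n+1$) and immediately yields $\tilde w_j = \int w\, q_j$, while the standard formula for Gauss weights plus the factorization $r_m(x) = r_m(x_j) + (x-x_j)s(x)$ together with orthogonality \eqref{eq:orthogonality_pre} applied to the product $r_m \pi_{n+1}$ against the degree-$(n-1)$ polynomial $s(x)\pi_{n+1}(x)/\pi_{n+1}(x)$... — the bookkeeping here is the crux, and I would take care to phrase it as: test \eqref{eq:rule_pre} against $\pi_{n+1}(x)r_m(x)/\big((x-x_j)r_m(x_j)\big)$ and test the plain rule against $\pi_{n+1}(x)/(x-x_j)$, then divide.
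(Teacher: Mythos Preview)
Your test-polynomial approach is sound in principle, but you have misread what $w_j$ denotes in this section, and that misreading is what derails you. The polynomial $\pi_{n+1}$ is defined by the orthogonality \eqref{eq:orthogonality_pre}, i.e.\ it is orthogonal with respect to the weight $w(x)\,r_m(x)$, not $w(x)$. Consequently the ``regular Gaussian quadrature rule associated with $\pi_{n+1}$'' in the paragraph preceding the lemma is the Gauss rule for the weight $w(x)\,r_m(x)$, and its weights are
\[
 w_j \;=\; \frac{1}{\pi_{n+1}'(x_j)}\int_{-1}^1 w(x)\,r_m(x)\,\frac{\pi_{n+1}(x)}{x-x_j}\,dx.
\]
With this in hand, your first displayed equation already reads $\tilde w_j = w_j/r_m(x_j)$ and you are done; the factorization $r_m(x) = r_m(x_j) + (x-x_j)s(x)$ and the subsequent bookkeeping are unnecessary. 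The identity you tried to establish, $\int w\,\frac{\pi_{n+1}}{x-x_j}\,r_m = \int w\,\frac{\pi_{n+1}}{x-x_j}$, is simply false in general, precisely because $\pi_{n+1}$ enjoys no orthogonality against $w$ alone.

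For comparison, the paper argues differently: instead of a cardinal test function, it decomposes an arbitrary polynomial $f$ of degree $m+2n+1$ as $f = f_1 + r_m f_2$, where $f_1$ interpolates $f$ at the preassigned nodes $y_i$, and then integrates $\int w\,r_m\,f_2$ exactly by the Gauss rule for $w\,r_m$ (legitimate since $\deg f_2 \le 2n+1$). Reading off the coefficient of $f(x_j)$ in the resulting sum yields $w_j/r_m(x_j)$. Your corrected argument is arguably more direct, isolating one weight at a time; the paper's argument has the merit of exhibiting the structural decomposition of the full rule.
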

\begin{proof}
 Let $f$ be a polynomial of degree $m+2n+1$. Let $f_1$ be the polynomial of degree $m$ that interpolates $f$ at $y_j$,
\[
 f_1(y_j) = f(y_j), \qquad j=1,\ldots,m.
\]
By construction, the remainder $f-f_1$ is divisible by $r_m$ and we define
\[
 f_2(x) = \frac{f(x)-f_1(x)}{r_m(x)},
\]
such that $f(x) = f_1(x) + r_m(x) f_2(x)$. We have
\begin{equation}\label{eq:I_dec}
 \int_{-1}^1 w(x) f(x) dx = \int_{-1}^1 w(x) f_1(x) dx + \int_{-1}^1 w(x) r_m(x) f_2(x) dx.
\end{equation}

Since the first integral in the right hand side of \eqref{eq:I_dec} depends only on $f(y_j)$, and since we are interested only in the weights $\hat{w}_j$ corresponding to $f(x_j)$, we focus on the second integral. Note that $f_2(x)$ has degree $2n+1$ by construction. Thus, the integral can be evaluated exactly with the interpolatory (Gaussian) quadrature rule based on using the roots of $\pi_{n+1}$, which is orthogonal with respect to the weight $w(x) r_m(x)$:
\begin{align*}
 \int_{-1}^1 w(x) r_m(x) f_2(x) dx &= \sum_{j=0}^n w_j f_2(x_j) \\
&= \sum_{j=0}^n w_j \frac{f(x_j) - f_1(x_j)}{r_m(x_j)}.
\end{align*}
Since $f_1(x_j)$ depends only on $f(y_j)$, and not on $f(x_j)$, the result follows.
\end{proof}

In the following, we will explore the corresponding generalizations of our earlier explicit expressions for the barycentric weights. Let $\lambda^x_j$ be the barycentric weights corresponding to the point set $\{x_j\}_{j=0}^n$ and $\lambda^y_j$ those corresponding to the point set $\{y_j\}_{j=1}^m$. For the combined set $\{x_j\}_{j=0}^n \cup \{y_j\}_{j=1}^m$, we denote by $\tilde{\lambda}_{j}$ the barycentric weights corresponding to the points $x_j$ and by $\hat{\lambda}_{j}$ the weights corresponding to the points $y_j$.\footnote{We will consistently use the notation $\tilde{\lambda}$ and $\tilde{w}$ for barycentric weights and quadrature weights corresponding to the roots of an orthogonal polynomial, and $\hat{\lambda}$ and $\hat{w}$ for the added points.}

We derive different expressions for the barycentric weights $\tilde{\lambda}_j$ and $\hat{\lambda}_j$. For the former, from~\eqref{eq:weight} we have
\begin{equation}\label{eq:inner_weights}
 \tilde{\lambda}_{j} = \frac{k_{n+1}}{\pi_{n+1}'(x_j) r_m(x_j)} = \frac{\lambda^x_j}{r_m(x_j)}, \qquad j=0,\ldots,n,
\end{equation}
where $k_{n+1}$ is the leading order coefficient of $\pi_{n+1}(x)$. For the latter, we find
\begin{equation}\label{eq:outer_weights}
 \hat{\lambda}_{j} = \frac{k_{n+1}}{\pi_{n+1}(y_j) r_m'(y_j)} = \lambda^y_j \frac{k_{n+1}}{\pi_{n+1}(y_j)}, \qquad j=1,\ldots,m.
\end{equation}
Assuming $m$ is fixed and $m \ll n$, the latter case presents no computational difficulties. The orthogonal polynomial $\pi_{n+1}$ can be evaluated in ${\mathcal O}(n)$ operations at a single point, for example based on the three-term recurrence relation. Thus, evaluating $\pi_{n+1}(y_j)$ at the $m$ points $y_j$ requires only ${\mathcal O}(mn)$ operations. The barycentric weights $\lambda^y_j$ are easily computed in at most ${\mathcal O}(m^2)$ operations. Hence, we focus on the weights given by~\eqref{eq:inner_weights}. We will consider a number of interesting cases in which roots of classical orthogonal polynomials are supplemented with additional interpolation points.

\subsection{Gauss-Lobatto variant for Jacobi polynomials}\label{ss:jacobi_extrema}
% \subsection{Barycentric interpolation}

Let us consider first the Gauss-Lobatto points associated with Jacobi polynomials. Thus, we consider the additional points $\{y_j\} = \{-1,1\}$. Since
\[
r_2(x) = (x-1)(x+1) = x^2-1,
\]
we find for a weight of Jacobi type that
\[
 w(x) r_2(x) = (1-x)^\alpha (1+x)^\beta (x^2-1) = -(1-x)^{\alpha+1} (1+x)^{\beta+1}.
\]
For notational convenience, we let $w_j$ denote the Gaussian quadrature weights with respect to the positive weight function $(1-x)^{\alpha+1} (1+x)^{\beta+1}$, while $\tilde{w}_j$ denotes the corresponding interior weights of the Gauss-Lobatto quadrature rule for the Jacobi weight function $w(x)=(1-x)^\alpha (1+x)^\beta$. Hence, both $w_j$ and $\tilde{w}_j$ are positive values. With this notation, the result of Lemma~\ref{lem:preassigned} should be modified so that
\begin{equation}\label{eq:interior Lobatto quadweights}
 \tilde{w}_j = - \frac{w_j}{r_2(x_j)} = \frac{w_j}{1-x_j^2}, \quad j=0,\ldots,n.
\end{equation}

% in the following we will set
% $r_2(x)=1-x^2$ and consider the positive weight function $ w(x)
% r_2(x)=(1-x)^{\alpha+1} (1+x)^{\beta+1}$. Note that the change of
% sign in $r_2(x)$ does not affect the Lemma \ref{lem:preassigned}. We
% still have
% \begin{equation}\label{eq:interior Lobatto quadweights}
% \tilde{w}_j = \frac{w_j}{r_2(x_j)} = \frac{w_j}{1-x_j^2}, \quad
% j=0,\ldots,n.
% \end{equation}
It is easy to see that the interior nodes for Gauss-Lobatto
integration with respect to $w(x)$ are the roots of
$P_{n+1}^{(\alpha+1,\beta+1)}(x)$. These are precisely the roots of
$P_{n+2}^{(\alpha,\beta)}{'}(x)$, or the extrema of
$P_{n+2}^{(\alpha,\beta)}(x)$ in $(-1,1)$.

\begin{theorem}\label{th:jacobi_lobatto}
 Let $x_j$ be the roots of $P_{n+1}^{(\alpha+1,\beta+1)}(x)$ and denote by $\tilde{w}_j$ the corresponding interior weights of the Gauss-Lobatto quadrature rule for the Jacobi weight function $w(x)=(1-x)^\alpha (1+x)^\beta$. Then we may choose
\begin{equation}\label{eq:lobatto_w}
 \tilde{\lambda}_{j} =  C^{(\alpha+1,\beta+1)}_n (-1)^{j+1} \sqrt{\tilde{w}_j}, \qquad j=0,\ldots,n,
\end{equation}
with $ C^{(\alpha+1,\beta+1)}_n $ defined as in \eqref{eq:jacobi
common factor}.

The corresponding barycentric weights for the points $\pm 1$ are
\begin{equation}\label{eq:lobatto_w1}
 \hat{\lambda}_{1} =  C^{(\alpha+1,\beta+1)}_n \, \sqrt{(\beta+1) \hat{w}_{1}},
\end{equation}
and
\begin{equation}\label{eq:lobatto_w2}
\hat{\lambda}_{2} =  C^{(\alpha+1,\beta+1)}_n \, (-1)^{n} \,
\sqrt{(\alpha+1) \hat{w}_{2}}.
\end{equation}
\end{theorem}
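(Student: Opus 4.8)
The strategy is to combine the two ingredients already assembled in the paper: Corollary~\ref{cor:jacobi} applied to the shifted parameters $(\alpha+1,\beta+1)$, and the bookkeeping relations \eqref{eq:inner_weights}–\eqref{eq:outer_weights} together with the modified Lemma~\ref{lem:preassigned} stated in \eqref{eq:interior Lobatto quadweights}. For the interior weights $\tilde\lambda_j$, I would start from \eqref{eq:inner_weights}, $\tilde\lambda_j = \lambda^x_j / r_2(x_j)$, where $\lambda^x_j$ are the barycentric weights for the roots $x_j$ of $P_{n+1}^{(\alpha+1,\beta+1)}$. By Corollary~\ref{cor:jacobi} with parameters $(\alpha+1,\beta+1)$ these satisfy $\lambda^x_j = C^{(\alpha+1,\beta+1)}_n (-1)^j \sqrt{(1-x_j^2)w_j}$, where $w_j$ is the Gauss weight for the positive weight $(1-x)^{\alpha+1}(1+x)^{\beta+1}$. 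Since $r_2(x_j) = x_j^2-1 = -(1-x_j^2)$ and $\tilde w_j = w_j/(1-x_j^2)$ by \eqref{eq:interior Lobatto quadweights}, one computes
\[
\tilde\lambda_j = \frac{C^{(\alpha+1,\beta+1)}_n (-1)^j \sqrt{(1-x_j^2) w_j}}{-(1-x_j^2)} = -C^{(\alpha+1,\beta+1)}_n (-1)^j \sqrt{\frac{w_j}{1-x_j^2}} = C^{(\alpha+1,\beta+1)}_n (-1)^{j+1}\sqrt{\tilde w_j},
\]
which is \eqref{eq:lobatto_w}. (Because the second barycentric formula is invariant under a global rescaling of all $\lambda$'s, the overall sign and constant are harmless; the phrasing ``we may choose'' in the statement reflects this.)

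For the endpoint weights $\hat\lambda_1$ (at $y_1=+1$) and $\hat\lambda_2$ (at $y_2=-1$), I would use \eqref{eq:outer_weights}: $\hat\lambda_j = \lambda^y_j\, k_{n+1}/\pi_{n+1}(y_j)$, with $\pi_{n+1} = P_{n+1}^{(\alpha+1,\beta+1)}$ and $\lambda^y_j$ the barycentric weights for the two-point set $\{-1,1\}$, namely $\lambda^y_1 = 1/(1-(-1)) = 1/2$ and $\lambda^y_2 = 1/((-1)-1) = -1/2$. So I need the two special values $P_{n+1}^{(\alpha+1,\beta+1)}(\pm1)$, its leading coefficient $k_{n+1}$, and the endpoint Gauss–Lobatto quadrature weights $\hat w_1,\hat w_2$. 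The endpoint values are the classical formulae $P_{m}^{(a,b)}(1) = \binom{m+a}{m}$ and $P_{m}^{(a,b)}(-1) = (-1)^m \binom{m+b}{m}$; $k_{n+1}$ is the Jacobi leading coefficient already recorded in \S\ref{ss:explicit} with $\alpha\to\alpha+1$, $\beta\to\beta+1$. The endpoint weights $\hat w_1,\hat w_2$ of Gauss–Lobatto for the Jacobi weight are also classical (they can be found in Gautschi or derived directly); the key fact I would invoke is that they contain a factor $\Gamma(\alpha+2)$ (resp. $\Gamma(\beta+2)$) which, upon taking the square root, produces the $\sqrt{\beta+1}$ and $\sqrt{\alpha+1}$ appearing in \eqref{eq:lobatto_w1}–\eqref{eq:lobatto_w2}. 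Substituting all of this into \eqref{eq:outer_weights} and simplifying the ratio of Gamma functions against $C^{(\alpha+1,\beta+1)}_n$ should collapse to the stated forms, with the sign $(-1)^n$ for $\hat\lambda_2$ coming from the $(-1)^{n+1}$ in $P_{n+1}^{(\alpha+1,\beta+1)}(-1)$ against the $-1/2$ from $\lambda^y_2$.

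The main obstacle is the endpoint part: matching the explicit closed forms for $P_{n+1}^{(\alpha+1,\beta+1)}(\pm1)$, $k_{n+1}$, and the Gauss–Lobatto endpoint weights $\hat w_1,\hat w_2$ so that the bulky product of Gamma functions telescopes exactly into $C^{(\alpha+1,\beta+1)}_n\sqrt{(\beta+1)\hat w_1}$ and $C^{(\alpha+1,\beta+1)}_n(-1)^n\sqrt{(\alpha+1)\hat w_2}$. This is a careful but routine Gamma-function bookkeeping exercise; the one genuine subtlety is keeping track of the overall normalization consistently between the interior and endpoint weights (they must use the \emph{same} constant for the barycentric formula to be valid), which is why the same $C^{(\alpha+1,\beta+1)}_n$ appears throughout and why the signs are pinned down only up to the freedom already noted. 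The interior part, by contrast, is immediate once Corollary~\ref{cor:jacobi} and \eqref{eq:interior Lobatto quadweights} are in hand.
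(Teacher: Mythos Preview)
Your approach is essentially the same as the paper's: for the interior weights you combine Corollary~\ref{cor:jacobi} (with shifted parameters) and \eqref{eq:interior Lobatto quadweights} via \eqref{eq:inner_weights}, and for the endpoints you compute $\hat\lambda_j$ directly from \eqref{eq:outer_weights} and then match against the known Gauss--Lobatto endpoint weights from Gautschi. The paper does exactly this, squaring and comparing $(C^{(\alpha+1,\beta+1)}_n)^2\hat w_1$ with $\hat\lambda_1^2$ to extract the factor $\beta+1$.

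There is, however, a labeling slip in your endpoint discussion: in the theorem as stated, $\hat\lambda_1$ carries the factor $\sqrt{\beta+1}$ and $\hat\lambda_2$ carries $\sqrt{\alpha+1}$, so $y_1=-1$ and $y_2=+1$ (since $\beta$ is the parameter associated with $(1+x)^\beta$). You have assigned $y_1=+1$, $y_2=-1$, which would yield the $\alpha$ and $\beta$ factors the wrong way round and also misplace the $(-1)^n$: in the paper's convention it is $\hat\lambda_1$ (at $-1$) that picks up $(-1)^n$ from $P_{n+1}^{(\alpha+1,\beta+1)}(-1)$, and this is then absorbed into the $\sigma$ inside $C^{(\alpha+1,\beta+1)}_n$, leaving \eqref{eq:lobatto_w1} with no explicit sign; the explicit $(-1)^n$ in \eqref{eq:lobatto_w2} is what remains after $\sigma$ meets the positive value $P_{n+1}^{(\alpha+1,\beta+1)}(1)$. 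Once you swap the labels, your outline goes through verbatim.
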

\begin{proof}
 Let $\lambda^x_j$ be the barycentric weights corresponding to the point set $\{x_j\}_{j=0}^{n}$. From Corollary~\ref{cor:jacobi} we already know that
\[
 \lambda^x_j =   C^{(\alpha+1,\beta+1)}_n \, (-1)^j \, \sqrt{(1-x_j^2)w_j}.
\]
Combining this expression with Lemma~\ref{lem:preassigned} leads to
\[
 \tilde{ \lambda}_{j} = \frac{\lambda^x_j}{-(1-x_j^2)} = -  C^{(\alpha+1,\beta+1)}_n \frac{(-1)^j \sqrt{(1-x_j^2)w_j}}{1-x_j^2} =   C^{(\alpha+1,\beta+1)}_n (-1)^{j+1} \sqrt{\tilde{w}_j}.
\]
This shows~\eqref{eq:lobatto_w}.

It remains to determine the barycentric weights corresponding to the
endpoints. Let $k_{n+1}^{(\alpha+1,\beta+1)}$ denotes the leading
coefficients of the Jacobi polynomial
$P_{n+1}^{(\alpha+1,\beta+1)}(x)$. It is known that (see \cite[p.~59
and p.~63]{szego1939polynomials})
\[
k_{n+1}^{(\alpha+1,\beta+1)}=\frac{1}{2^{n+1}}\binom{2n+\alpha+\beta+4}{n+1}, \quad P_{n+1}^{(\alpha+1,\beta+1)}(-1) = (-1)^{n+1} \binom{n+\beta+2}{n+1}.
\]
We have from~\eqref{eq:outer_weights} that
\begin{align}
 \hat{\lambda}_{1} &=  \frac{k_{n+1}^{(\alpha+1,\beta+1)}}{r_2'(-1) P_{n+1}^{(\alpha+1,\beta+1)}(-1)}\nonumber \\
 & = -\frac12 k_{n+1}^{(\alpha+1,\beta+1)}
 \frac{1}{P_{n+1}^{(\alpha+1,\beta+1)}(-1)}\nonumber \\
 & =
 \frac{(-1)^{n}}{2^{n+2}}\frac{\Gamma(2n+\alpha+\beta+5)\Gamma(\beta+2)}{\Gamma(n+\beta+3)\Gamma(n+\alpha+\beta+4)}.
\end{align}
We have used $r_2(x) = x^2 - 1$, so that $r_2'(-1)=-2$. From
\cite[Eqs. (3.10) and (3.11)]{gautschi2000lobatto} we know that
\begin{align*}
\hat{w}_1 &=
2^{\alpha+\beta+1}\frac{\Gamma(\alpha+2)\Gamma(\beta+1)}{\Gamma(\alpha+\beta+3)}\frac{\binom{n+\alpha+2}{n+1}}{\binom{n+\beta+2}{n+1}\binom{n+\alpha+\beta+3}{n+1}}\nonumber
\\
&=2^{\alpha+\beta+1}
\frac{\Gamma(\beta+1)\Gamma(n+\alpha+3)\Gamma(\beta+2)(n+1)!}{\Gamma(n+\beta+3)\Gamma(n+\alpha+\beta+4)},
\end{align*}
and
\begin{align*}
\hat{w}_2 &=
2^{\alpha+\beta+1}\frac{\Gamma(\beta+2)\Gamma(\alpha+1)}{\Gamma(\alpha+\beta+3)}\frac{\binom{n+\beta+2}{n+1}}{\binom{n+\alpha+2}{n+1}\binom{n+\alpha+\beta+3}{n+1}}\nonumber
\\
&=2^{\alpha+\beta+1}
\frac{\Gamma(\alpha+1)\Gamma(n+\beta+3)\Gamma(\alpha+2)(n+1)!}{\Gamma(n+\alpha+3)\Gamma(n+\alpha+\beta+4)}.
\end{align*}
Motivated by the form of \eqref{eq:lobatto_w}, after some
calculations, we find that
\begin{align*}
( C^{(\alpha+1,\beta+1)}_n )^2 \hat{w}_1&= \frac{\Gamma(2n+\alpha+\beta+5)^2\Gamma(\beta+1)\Gamma(\beta+2)}{2^{2n+4}\Gamma(n+\beta+3)^2\Gamma(n+\alpha+\beta+4)^2}\nonumber \\
&=\frac{1}{\beta+1}\left(\frac{\Gamma(2n+\alpha+\beta+5)\Gamma(\beta+2)}{2^{n+2}\Gamma(n+\beta+3)\Gamma(n+\alpha+\beta+4)}\right)^2
\nonumber \\
&=\frac{\hat{\lambda}_{1}^2}{\beta+1}.
\end{align*}
Hence,
\begin{align*}
\hat{\lambda}_{1}^2 = ( C^{(\alpha+1,\beta+1)}_n )^2 (\beta + 1 )
\hat{w}_1.
\end{align*}
Similarly, we can show that
\begin{align*}
\hat{\lambda}_{2}^2 = ( C^{(\alpha+1,\beta+1)}_n )^2 (\alpha + 1 )
\hat{w}_2.
\end{align*}
Again noting that the barycentric weights have alternating signs,
expressions ~\eqref{eq:lobatto_w1} and \eqref{eq:lobatto_w2}
follow.

\end{proof}

Note for completeness that in Theorem~\ref{th:jacobi_lobatto} we consider interpolation in a set of $n+3$ points in total. These points are
\[
 \{ -1,1 \} \cup \{ x_j \}_{j=0}^n,
\]
where $x_j$ are the $n+1$ roots of $P_{n+1}^{(\alpha+1,\beta+1)}$. In our current notation, the weights of the corresponding Gauss-Jacobi quadrature rule, relative to the weight function $(1-x)^\alpha (1+x)^\beta$, are
\[
 \{ \hat{w}_{1}, \hat{w}_{2} \} \cup \{ \tilde{w}_{j} \}_{j=0}^n.
\]

The result of Theorem~\ref{th:jacobi_lobatto} may be written more
concisely as follows.
\begin{corollary}\label{cor:jacobi_lobatto}
 Let $-1 = x_0 < x_1 < \cdots < x_n = 1 $ be the roots of $(1-x^2) P_{n}^{(\alpha,\beta)}{'}(x)$ and let $\{ w_j \}_{j=0}^{n}$ be
 the corresponding weights of the interpolatory quadrature rule associated with the weight function $(1-x)^\alpha(1+x)^\beta$. Then for $n \geq 1$, the barycentric
 weights for the interpolation points $\{ x_j \}_{j = 0}^{n}$ are
\begin{align}
 \lambda_j =  C^{(\alpha+1,\beta+1)}_{n - 2}  (-1)^j \sqrt{\delta_j w_j}, \quad
\delta_j = \left\{\begin{array}{ll}
                                          \beta+1,    & \hbox{$\textstyle  j=0$},\\
                                          \alpha+1,   & \hbox{$\textstyle  j=n$},\\
                                          1,          & \hbox{otherwise}.
                                        \end{array}\right.
\end{align}
The simplified barycentric weights can be obtained directly by
deleting the factor $C^{(\alpha+1,\beta+1)}_{n - 2} $.
\end{corollary}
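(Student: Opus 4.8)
The plan is to reindex the set $\{-1,1\}\cup\{x_j\}_{j=0}^n$ of Theorem~\ref{th:jacobi_lobatto}, where $x_j$ are the roots of $P_{n+1}^{(\alpha+1,\beta+1)}$, into the single monotone list $-1 = x_0 < x_1 < \cdots < x_n = 1$ appearing in the corollary, and then translate the three formulae \eqref{eq:lobatto_w}, \eqref{eq:lobatto_w1}, \eqref{eq:lobatto_w2} accordingly. The first observation is a bookkeeping one: the corollary's index $n$ equals the theorem's index $n+2$ (the theorem interpolates in $n+3$ points total, and the corollary's $n$ is one less than the total number of points), so everywhere the theorem writes $C^{(\alpha+1,\beta+1)}_n$ we must write $C^{(\alpha+1,\beta+1)}_{n-2}$, and the $n+1$ interior roots of $P_{n+1}^{(\alpha+1,\beta+1)}$ are, by the identity recalled just before Theorem~\ref{th:jacobi_lobatto}, exactly the roots of $(1-x^2)P_{n}^{(\alpha,\beta)}{}'(x)$ in $(-1,1)$, i.e. the interior points $x_1,\ldots,x_{n-1}$ of the corollary.

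Next I would match the quadrature weights. The corollary speaks of $\{w_j\}_{j=0}^n$, the weights of the interpolatory rule for $(1-x)^\alpha(1+x)^\beta$ at the $n+1$ points $x_0,\ldots,x_n$; this is precisely the Gauss-Lobatto rule for the Jacobi weight. Under the reindexing, the interior weights $w_1,\ldots,w_{n-1}$ are the $\tilde w_j$ of the theorem, and the endpoint weights $w_0,w_n$ are $\hat w_1,\hat w_2$ respectively. With this dictionary in hand, the three cases are immediate: for an interior point, \eqref{eq:lobatto_w} gives $\lambda_j = C^{(\alpha+1,\beta+1)}_{n-2}(-1)^{j'+1}\sqrt{w_j}$ where $j'$ is the old index; since shifting the index by one (to account for $x_0=-1$ being inserted before the interior block) turns $(-1)^{j'+1}$ into $(-1)^{j}$, this is $C^{(\alpha+1,\beta+1)}_{n-2}(-1)^j\sqrt{\delta_j w_j}$ with $\delta_j=1$. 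For $j=0$, \eqref{eq:lobatto_w1} gives $\lambda_0 = C^{(\alpha+1,\beta+1)}_{n-2}\sqrt{(\beta+1)w_0} = C^{(\alpha+1,\beta+1)}_{n-2}(-1)^0\sqrt{\delta_0 w_0}$; and for $j=n$, \eqref{eq:lobatto_w2} gives $\lambda_n = C^{(\alpha+1,\beta+1)}_{n-2}(-1)^{n'}\sqrt{(\alpha+1)w_n}$, which under the index correction matches $C^{(\alpha+1,\beta+1)}_{n-2}(-1)^n\sqrt{\delta_n w_n}$.

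The main obstacle is purely one of sign- and index-consistency: one must check carefully that inserting the left endpoint $-1$ as the new $x_0$ shifts every interior barycentric weight's sign by exactly one factor of $-1$ (barycentric weights alternate in sign along a monotone node list, so inserting one node at the far left flips the parity of all subsequent ones), and that the exponent $(-1)^{n}$ attached to $\hat\lambda_2$ in \eqref{eq:lobatto_w2}, together with whatever parity the old interior indices carried, produces exactly $(-1)^n$ in the new indexing at $j=n$. Since all the signs in Theorems~\ref{th:weights} and \ref{th:jacobi_lobatto} were ultimately fixed by the alternating-sign convention rather than computed directly, the cleanest way to close the argument is to verify the single relation $\lambda_0/\lambda_1$ (or any one adjacent ratio) has the correct sign from first principles via \eqref{eq:weight}, and then invoke the alternating-sign property to propagate it, so that the scalar $C^{(\alpha+1,\beta+1)}_{n-2}$ and the $\sqrt{\delta_j w_j}$ magnitudes — already established in the theorem — complete the proof. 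Finally, deleting the constant $C^{(\alpha+1,\beta+1)}_{n-2}$ to obtain the simplified weights is justified exactly as in Corollary~\ref{cor:jacobi}, since a common nonzero factor cancels in the barycentric formula~\eqref{eq:bary_form}.
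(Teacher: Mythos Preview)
Your approach is correct and is exactly what the paper intends: the corollary is simply a reindexed restatement of Theorem~\ref{th:jacobi_lobatto}, and the paper offers no separate proof beyond the phrase ``may be written more concisely as follows.'' Your bookkeeping is right (the theorem's $n$ becomes the corollary's $n-2$, so $C^{(\alpha+1,\beta+1)}_n$ becomes $C^{(\alpha+1,\beta+1)}_{n-2}$), and the three cases match as you describe. The ``main obstacle'' paragraph is more cautious than necessary: once you write the new interior index as $j=j'+1$, the theorem's factor $(-1)^{j'+1}$ becomes $(-1)^j$ mechanically, and for the right endpoint $(-1)^m$ with $m=n-2$ is $(-1)^n$; no separate first-principles sign check via~\eqref{eq:weight} is needed.
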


We are especially concerned with some special cases of the Gauss-Jacobi-Lobatto points. When $\alpha=\beta=-1/2$, this corresponds to the Gauss-Chebyshev-Lobatto points, which are also called Chebyshev points of the second kind or Clenshaw-Curtis points. The Gauss-Chebyshev-Lobatto quadrature rule is given by
\begin{displaymath}
\int_{-1}^{1}\frac{f(x)}{\sqrt{1-x^2}}dx\simeq\frac{\pi}{n}\sum_{k=0}^{n}{''}f(x_k),
\end{displaymath}
where the double prime denotes a sum whose first and last terms are
halved and the Gauss-Chebyshev-Lobatto points $x_k$ are given
explicitly in \eqref{eq:points_secondkind}. The following corollary is an immediate
consequence of Corollary \ref{cor:jacobi_lobatto}.

\begin{corollary}
For Gauss-Chebyshev-Lobatto points, the simplified barycentric
weights are given by
\begin{displaymath}
\lambda_j^{\mathrm{CH2}}=(-1)^j\delta_j,\quad
\delta_j=\bigg\{\begin{array}{cc}
                                          1/2,   & \mbox{$\textstyle  j=0$ or $j=n$},\\
                                          1,   & \mbox{otherwise}.
                                        \end{array}
\end{displaymath}
\end{corollary}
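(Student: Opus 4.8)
The plan is to derive this directly from Corollary~\ref{cor:jacobi_lobatto} by substituting the Chebyshev parameters $\alpha = \beta = -1/2$. First I would observe that the interpolation points in Corollary~\ref{cor:jacobi_lobatto} are the roots of $(1-x^2)P_n^{(-1/2,-1/2)\prime}(x)$, which are exactly the extrema of the Chebyshev polynomial $T_n$ on $[-1,1]$ together with $\pm 1$, i.e. the Gauss-Chebyshev-Lobatto points $x_k = \cos(k\pi/n)$ given in~\eqref{eq:points_secondkind}. So the point set matches and only the weight formula needs to be specialized.

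Next I would plug $\alpha = \beta = -1/2$ into the factor $\delta_j$ appearing in Corollary~\ref{cor:jacobi_lobatto}. This gives $\delta_j = \beta + 1 = 1/2$ for $j = 0$, $\delta_j = \alpha + 1 = 1/2$ for $j = n$, and $\delta_j = 1$ otherwise. Hence the barycentric weights are $\lambda_j = C^{(1/2,1/2)}_{n-2}\,(-1)^j\sqrt{\delta_j\, w_j}$, where $w_j$ are the Gauss-Chebyshev-Lobatto quadrature weights, which from the displayed quadrature rule equal $(\pi/n)$ times $1/2$ at the endpoints and $1$ otherwise; that is, $w_j = (\pi/n)(2\delta_j)$ in the same notation. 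Therefore $\delta_j w_j = (\pi/n)\cdot 2\delta_j^2$, and since $2\delta_j^2 = 1/2$ when $j \in \{0,n\}$ and $2\delta_j^2 = 2$ otherwise, we get $\sqrt{\delta_j w_j} = \sqrt{2\pi/n}\cdot(\delta_j)$ after absorbing constants — more carefully, $\sqrt{\delta_j w_j}$ is proportional to $\delta_j$ with the same constant of proportionality for every $j$.

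The last step is to pass to the simplified weights: since the second form of the barycentric formula~\eqref{eq:bary_form} is invariant under multiplying all $\lambda_j$ by a common nonzero constant, I would simply discard the $n$-dependent factor $C^{(1/2,1/2)}_{n-2}$ together with the common constant $\sqrt{2\pi/n}$ (or whatever overall constant emerges from the computation), leaving $\lambda_j^{\mathrm{CH2}} = (-1)^j \delta_j$ with $\delta_j = 1/2$ for $j = 0$ or $j = n$ and $\delta_j = 1$ otherwise. This reproduces exactly~\eqref{eq:weights_secondkind}.

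There is no real obstacle here; the only point requiring a little care is keeping track of the two distinct roles of the symbol $\delta_j$ (the factor in Corollary~\ref{cor:jacobi_lobatto} versus the endpoint-halving convention in the Clenshaw-Curtis rule) and confirming that their product under the square root collapses to a quantity proportional to the desired $\delta_j$. Once the proportionality is verified, cancellation of the common factor finishes the proof, so I expect the argument to occupy only a few lines.
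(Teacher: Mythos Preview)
Your approach is exactly the one the paper takes: the paper simply states that the result is an immediate consequence of Corollary~\ref{cor:jacobi_lobatto} with $\alpha=\beta=-1/2$, and you have spelled out the details of that specialization. One small slip: the quadrature weights satisfy $w_j = (\pi/n)\delta_j$, not $(\pi/n)(2\delta_j)$, but since you only need that $\sqrt{\delta_j w_j}$ is proportional to $\delta_j$ (which you correctly note), this does not affect the argument.
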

% \begin{proof}
% It follows directly by applying the corresponding quadrature rule
% and (4.1) and cancelling the common factors independent of $j$.
% \end{proof}

Thus, we have provided an alternative simple derivation of the barycentric weights for the Chebyshev points of the second kind.

Another important case of $\alpha=\beta=0$ corresponds to the Gauss-Legendre-Lobatto points. The Gauss-Legendre-Lobatto quadrature rule is defined by
\begin{displaymath}
\int_{-1}^{1}f(x)dx\simeq\sum_{j=0}^{n}w_jf(x_j),
\end{displaymath}
where the Gauss-Legendre-Lobatto points $\{x_j\}_{j=0}^{n}$ are the zeros of $(1-x^2)P_n'(x)$ and $P_n(x)$ is the Legendre polynomial of degree $n$. The following corollary gives the simplified barycentric weights for the Gauss-Legendre-Lobatto points.
\begin{corollary}\label{eq:gausslegendrelobatto}
For Gauss-Legendre-Lobatto points, the simplified barycentric weights $\lambda_j^{\mathrm{GLL}}$ are given by
\begin{equation}
\lambda_j^{\mathrm{GLL}} = (-1)^j \sqrt{w_j}, \qquad j = 0,\ldots,n,
\end{equation}
where $w_j$ are the Gauss-Legendre-Lobatto quadrature weights.
\end{corollary}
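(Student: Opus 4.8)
The plan is to obtain this corollary as an immediate specialization of Corollary~\ref{cor:jacobi_lobatto} to the case $\alpha=\beta=0$. First I would observe that the Jacobi weight function $(1-x)^\alpha(1+x)^\beta$ reduces to the constant weight $1$ when $\alpha=\beta=0$, so that the interpolatory quadrature rule on the node set $-1=x_0<x_1<\cdots<x_n=1$ given by the roots of $(1-x^2)P_n^{(0,0)}{}'(x)=(1-x^2)P_n'(x)$ is precisely the Gauss-Legendre-Lobatto rule with weights $w_j$. Hence the hypotheses of Corollary~\ref{cor:jacobi_lobatto} are met with $\alpha=\beta=0$, and the $w_j$ appearing there coincide with the Gauss-Legendre-Lobatto quadrature weights in the statement.

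Next I would evaluate the factor $\delta_j$ of Corollary~\ref{cor:jacobi_lobatto} at $\alpha=\beta=0$: one gets $\delta_0=\beta+1=1$, $\delta_n=\alpha+1=1$, and $\delta_j=1$ for $0<j<n$, so $\delta_j\equiv 1$. Therefore the (non-simplified) barycentric weights are $C^{(1,1)}_{n-2}(-1)^j\sqrt{w_j}$, and passing to the simplified weights amounts to dropping the common constant factor $C^{(1,1)}_{n-2}$, which leaves $\lambda_j^{\mathrm{GLL}}=(-1)^j\sqrt{w_j}$, as claimed.

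The only point requiring a moment's care — and it is not really an obstacle — is the bookkeeping of conventions: one must check that the Legendre polynomial $P_n$ in the statement is the standard $P_n^{(0,0)}$ used throughout Section~\ref{ss:jacobi_extrema}, and that the ``Gauss-Legendre-Lobatto quadrature weights'' are exactly the interpolatory weights attached to the full node set $\{x_j\}_{j=0}^n$ (endpoint weights included), so that the $w_j$ on both sides genuinely match. Once this identification is in place, no further computation is needed. Alternatively, one could bypass Corollary~\ref{cor:jacobi_lobatto} and specialize Theorem~\ref{th:jacobi_lobatto} directly, combining \eqref{eq:lobatto_w}, \eqref{eq:lobatto_w1} and \eqref{eq:lobatto_w2} at $\alpha=\beta=0$ and again deleting the common factor; this yields the same formula and provides an independent check.
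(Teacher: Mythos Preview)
Your proposal is correct and follows exactly the approach of the paper, which simply states that the result ``follows readily from Corollary~\ref{cor:jacobi_lobatto}.'' You have merely spelled out the specialization $\alpha=\beta=0$ (so that $\delta_j\equiv 1$) that the paper leaves implicit.
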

\begin{proof}
It follows readily from Corollary \ref{cor:jacobi_lobatto}.
\end{proof}

Below, we list the steps for computing the interpolant that
interpolates $f(x)$ at the Gauss-Jacobi-Lobatto points, i.e. roots
of $(1-x^2)P_{n}^{(\alpha,\beta)}{'}(x)$.

\vspace{.2cm} {\sc Algorithm 1.} Computation of the
Gauss-Jacobi-Lobatto interpolant:
\begin{enumerate}
\item  Compute the nodes and weights of the $(n-1)$-point
Gauss-Jacobi quadrature with respect to the weight
$(1-x)^{\alpha+1}(1+x)^{\beta+1}$ by the Hale-Townsend
algorithm \cite{hale2012fastgauss}. The interpolation points are the $(n-1)$ nodes
supplemented with two additional points $\pm 1$.
\item Calculate the interior Gauss-Jacobi-Lobatto quadrature weights by
\eqref{eq:interior Lobatto quadweights} and compute the two boundary
quadrature weights by their explicit expressions.
\item Evaluate the barycentric weights by the corollary \ref{cor:jacobi_lobatto}.
\item Compute the Gauss-Jacobi-Lobatto interpolant by its
barycentric representation.
\end{enumerate}

In step 2, the two boundary quadrature weights can be computed
directly if $n$ is smaller than about 100. When $n$ is larger than
about 100, however, direct evaluation of both boundary quadrature
weights would result in an overflow. This problem can be avoided by
reformulating both boundary quadrature weights via logarithms.

\subsection{Gauss-Radau variant for Jacobi polynomials}

The Gauss-Radau variant is one where we include only the left endpoint $x=-1$. In that case, we have
\[
 r_1(x) = x+1.
\]
Thus, the other quadrature points are the roots of $P_{n+1}^{(\alpha,\beta+1)}$. We have the following result.

\begin{theorem}\label{th:jacobi radau}
Let $\{x_j\}_{j=0}^{n}$ be the roots of $P_{n+1}^{(\alpha,\beta+1)}(x)$ and denote by $\tilde{w}_j$ the corresponding
weights of the Gauss-Jacobi-Radau quadrature rule with respect to the Jacobi weight function $w(x)=(1-x)^\alpha (1+x)^\beta$. Then the barycentric weights corresponding to the interior nodes $x_j$ are given by
\begin{align}\label{eq:inner jacobi radau}
\tilde{\lambda}_{j} = C^{(\alpha,\beta+1)}_n (-1)^j \sqrt{( 1 - x_j
) \tilde{w}_j},\quad   j = 0,\ldots,n.
\end{align}
The barycentric weight corresponding to the point $x=-1$ is
\begin{align}\label{eq:outer jacobi radau}
\hat{\lambda}_{1} = - C^{(\alpha,\beta+1)}_n \sqrt{2(\beta+1)
\hat{w}_1}.
\end{align}
\end{theorem}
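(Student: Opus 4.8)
The plan is to mirror the two-part structure of the proof of Theorem~\ref{th:jacobi_lobatto}: first derive the interior barycentric weights $\tilde{\lambda}_j$ from Corollary~\ref{cor:jacobi} together with Lemma~\ref{lem:preassigned}, and then compute the single boundary weight $\hat{\lambda}_1$ directly from~\eqref{eq:outer_weights} and an explicit expression for the Gauss--Radau boundary quadrature weight.

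For the interior nodes, the key simplification relative to the Lobatto case is that here $r_1(x)=x+1$ is positive on $(-1,1)$, so $w(x)r_1(x)=(1-x)^\alpha(1+x)^{\beta+1}$ is a genuine positive Jacobi weight and Lemma~\ref{lem:preassigned} applies verbatim, giving $w_j=(1+x_j)\tilde{w}_j$, where $w_j$ denotes the Gauss weight associated with $(1-x)^\alpha(1+x)^{\beta+1}$. Applying Corollary~\ref{cor:jacobi} with $(\alpha,\beta)$ replaced by $(\alpha,\beta+1)$ gives $\lambda^x_j = C^{(\alpha,\beta+1)}_n (-1)^j\sqrt{(1-x_j^2)w_j}$, and substituting this together with $w_j=(1+x_j)\tilde{w}_j$ into $\tilde{\lambda}_j = \lambda^x_j/r_1(x_j)=\lambda^x_j/(1+x_j)$ from~\eqref{eq:inner_weights}, the positive factor $1+x_j$ cancels one power inside the square root against the explicit factor outside, leaving exactly~\eqref{eq:inner jacobi radau}.

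For the endpoint $x=-1$, I would use~\eqref{eq:outer_weights} with $r_1'(-1)=1$, together with the classical values $k_{n+1}^{(\alpha,\beta+1)}=2^{-(n+1)}\binom{2n+\alpha+\beta+3}{n+1}$ and $P_{n+1}^{(\alpha,\beta+1)}(-1)=(-1)^{n+1}\binom{n+\beta+2}{n+1}$ from~\cite{szego1939polynomials}, to obtain a closed form for $\hat{\lambda}_1$ as $(-1)^{n+1}$ times a ratio of Gamma functions. Then, inserting the known explicit formula for the Gauss--Radau boundary weight $\hat{w}_1$ relative to the Jacobi weight $w(x)=(1-x)^\alpha(1+x)^\beta$ (available, e.g., from Gautschi's work on Gauss--Radau formulae for Jacobi weights), I would verify by direct manipulation of Gamma-function ratios --- exactly as in the corresponding Lobatto computation --- that $(C^{(\alpha,\beta+1)}_n)^2\,2(\beta+1)\hat{w}_1=\hat{\lambda}_1^2$. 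Finally, the minus sign in~\eqref{eq:outer jacobi radau} is pinned down by the alternating-sign convention for barycentric weights: since $-1$ lies to the left of all the $x_j$, it is the leftmost of the $n+2$ combined nodes, which forces the sign of $\hat{\lambda}_1$ to be opposite to that of $C^{(\alpha,\beta+1)}_n$; one can also read this sign off the closed form above and confirm that it agrees, using $C^{(\alpha,\beta+1)}_n=\sigma\cdot(\text{positive})$ with $\sigma=(-1)^n$.

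The interior part is immediate once Corollary~\ref{cor:jacobi} and Lemma~\ref{lem:preassigned} are in hand. The only genuine work --- and the step most prone to bookkeeping slips --- is locating the explicit formula for $\hat{w}_1$ and carrying through the Gamma-function algebra to match $\hat{\lambda}_1^2$; structurally this is identical to the verification for the two endpoints in Theorem~\ref{th:jacobi_lobatto}, so I anticipate no conceptual obstacle, only a somewhat tedious computation.
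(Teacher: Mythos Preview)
Your proposal is correct and follows essentially the same approach as the paper: the interior weights are obtained by combining Corollary~\ref{cor:jacobi} (with parameters $(\alpha,\beta+1)$) and~\eqref{eq:inner_weights} via $\tilde{\lambda}_j=\lambda_j^x/(1+x_j)$, and the boundary weight is computed from~\eqref{eq:outer_weights} using the explicit values of $k_{n+1}^{(\alpha,\beta+1)}$, $P_{n+1}^{(\alpha,\beta+1)}(-1)$, and Gautschi's formula for $\hat{w}_1$, followed by a Gamma-function verification of $\hat{\lambda}_1^2=2(\beta+1)(C^{(\alpha,\beta+1)}_n)^2\hat{w}_1$ and the alternating-sign convention. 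Your sign argument for $\hat{\lambda}_1$ via the position of $-1$ relative to $x_0$ is a slightly more explicit version of what the paper does.
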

\begin{proof}
 Let $\lambda_{j}^{x}$ be the barycentric weights corresponding to the point set
 $\{x_j\}_{j=0}^{n}$. From \eqref{eq:inner_weights} and Corollary \ref{cor:jacobi}, it follows that
 \begin{align}
\tilde{\lambda}_{j} = \frac{\lambda_{j}^{x}}{1 + x_j} = \frac{
C^{(\alpha,\beta+1) }_n (-1)^j \sqrt{(1 - x _j^2 ) w_j}}{ 1 + x_j }
= C^{(\alpha,\beta+1)}_n (-1)^j \sqrt{(1 - x_j) \tilde{w}_j}.
 \end{align}
 This proves \eqref{eq:inner jacobi radau}. For the barycentric
 weight corresponding to the point $x=-1$, applying
 \eqref{eq:outer_weights} yields
\begin{align}
\hat{\lambda}_{1}&=\frac{k_{n+1}^{(\alpha,\beta+1)}}{r_1'(-1)
P_{n+1}^{(\alpha,\beta+1)}(-1)}\nonumber \\
&=\frac{(-1)^{n+1}\Gamma(2n+\alpha+\beta+4)\Gamma(\beta+2)}{2^{n+1}\Gamma(n+\beta+3)\Gamma(n+\alpha+\beta+3)}.
\end{align}
On the other hand, from \cite{gautschi2000radau} we have
\begin{align*}
\hat{w}_1&=\frac{2^{\alpha+\beta+1}\Gamma(\beta+1)\Gamma(n+\alpha+2)}{\binom{n+\beta+2}{n+1}\Gamma(n+\alpha+\beta+3)}\\
&=\frac{2^{\alpha+\beta+1}\Gamma(\beta+1)\Gamma(\beta+2)\Gamma(n+\alpha+2)\Gamma(n+2)}{\Gamma(n+\beta+3)\Gamma(n+\alpha+\beta+3)}.
\end{align*}
and by some computations,
\begin{align*}
(C^{(\alpha,\beta+1)}_n)^2 \hat{w}_1&=\frac{\Gamma(\beta+1)\Gamma(\beta+2)\Gamma(2n+\alpha+\beta+4)^2}{2^{2n+3}\Gamma(n+\alpha+\beta+3)^2\Gamma(n+\beta+3)^2} \nonumber\\
&=\frac{1}{2(\beta+1)}\frac{\Gamma(\beta+2)^2\Gamma(2n+\alpha+\beta+4)^2}{2^{2n+2}\Gamma(n+\alpha+\beta+3)^2\Gamma(n+\beta+3)^2}  \nonumber \\
&=\frac{\hat{\lambda}_{1}^2}{2(\beta+1)}.
\end{align*}
Thus we find
\begin{align*}
\hat{\lambda}_{1}^2=2(\beta+1)(C^{(\alpha,\beta+1)}_n)^2 \hat{w}_1.
\end{align*}
Since the barycentric weights have alternating signs, expression
\eqref{eq:outer jacobi radau} follows.
\end{proof}

Note for completeness that in the last theorem we consider barycentric interpolation in a set of $n+2$ points in total. These points are
\[
 \{ -1 \} \cup \{ x_j \}_{j=0}^n,
\]
where $x_j$ are the $n+1$ roots of $P_{n+1}^{(\alpha,\beta+1)}$. In our current notation, the corresponding Gauss-Radau quadrature weights are
\[
 \{ \hat{w}_{1} \} \cup \{ \tilde{w}_{j} \}_{j=0}^n.
\]

The result of Theorem~\ref{th:jacobi radau} may be written more
concisely as follows.
\begin{corollary}\label{cor:jacobi_radau}
 Let $-1=x_0<x_1<\cdots<x_n<1$ be the roots of $(1+x) P_{n}^{(\alpha,\beta+1)}(x)$ and let $w_j$ be the corresponding weights of the interpolatory
 quadrature rule with the weight function $(1-x)^\alpha(1+x)^\beta$. Then the simplified barycentric weights are
\[
 \lambda_j = (-1)^j \sqrt{(1-x_j)\delta_j w_j},\quad
\delta_j=\left\{\begin{array}{cc}
                                          \beta+1,    & \hbox{$\textstyle  j=0$},\\
                                          1,          & \hbox{otherwise}.
                                        \end{array}\right.
\]
\end{corollary}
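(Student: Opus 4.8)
The plan is to show that Corollary~\ref{cor:jacobi_radau} is merely a reformulation of Theorem~\ref{th:jacobi radau}, obtained by simplifying away the common constant factor $C^{(\alpha,\beta+1)}_n$ and relabelling the nodes. First I would set up the dictionary between the two statements. In Theorem~\ref{th:jacobi radau} the total interpolation set is $\{-1\}\cup\{x_j\}_{j=0}^n$, where the $x_j$ are the $n+1$ roots of $P_{n+1}^{(\alpha,\beta+1)}$; this is exactly the zero set of $(1+x)P_{n+1}^{(\alpha,\beta+1)}(x)$. In the corollary we rename the full ordered set of $n+2$ points as $-1=x_0<x_1<\cdots<x_n<1$, i.e.\ we shift the index by one relative to the theorem: the corollary's $x_0$ is the added point $-1$, and the corollary's $x_1,\dots,x_n$ are the roots of $P_n^{(\alpha,\beta+1)}$ (note the degree drop from $n+1$ to $n$, which is forced by the relabelling $n\mapsto n-1$). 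Correspondingly the corollary's $w_j$ are the Gauss--Radau quadrature weights relative to $(1-x)^\alpha(1+x)^\beta$: $w_0=\hat{w}_1$ is the boundary weight and $w_1,\dots,w_n$ are the interior weights $\tilde w_j$.

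With this translation in place, the interior formula follows directly from \eqref{eq:inner jacobi radau}: for $j\geq 1$ we have $\lambda_j = C^{(\alpha,\beta+1)}_{n-1}(-1)^{j-1}\sqrt{(1-x_j)w_j}$, and since the barycentric weights are only determined up to a common nonzero multiple (the constant cancels in both the first and second barycentric formulas after the corresponding cancellation in $\ell(x)$), we may drop $C^{(\alpha,\beta+1)}_{n-1}$ and absorb the sign $(-1)^{j-1}=(-1)^{j+1}$, which differs from $(-1)^j$ only by an overall sign, into the global normalization. This yields $\lambda_j=(-1)^j\sqrt{(1-x_j)w_j}$ for $j=1,\dots,n$, matching $\delta_j=1$. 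For the boundary point, \eqref{eq:outer jacobi radau} gives $\hat\lambda_1 = -C^{(\alpha,\beta+1)}_{n-1}\sqrt{2(\beta+1)\hat w_1}$; after removing the same common factor and the same overall sign, and using $1-x_0=1-(-1)=2$, this becomes $\lambda_0=\sqrt{(1-x_0)(\beta+1)w_0}$ up to sign, consistent with $\delta_0=\beta+1$ and the alternating-sign convention (here the sign on $\lambda_0$ is whatever makes the sequence alternate starting the interior run correctly).

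The only genuinely delicate point is bookkeeping rather than mathematics: one must check that the same scalar $C^{(\alpha,\beta+1)}_{n-1}$ (the version of \eqref{eq:jacobi common factor} with $n$ replaced by $n-1$, because the interior polynomial in the corollary has degree $n$ rather than $n+1$) appears in front of \emph{both} the interior weights and the boundary weight in Theorem~\ref{th:jacobi radau}, so that cancelling it is legitimate and produces a single consistent simplified set. Reading off the theorem's two displayed formulas, the common factor is indeed $C^{(\alpha,\beta+1)}_n$ in both, so under the substitution $n\mapsto n-1$ it is $C^{(\alpha,\beta+1)}_{n-1}$ in both, and the cancellation is uniform. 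I would also verify the sign pattern is internally consistent: with the interior signs running as $(-1)^{j+1}$ for $j=1,\dots,n$ and the boundary sign being $-1$, the resulting sequence $\lambda_0,\lambda_1,\dots,\lambda_n$ does alternate, matching the standing assumption in Section~\ref{s:bary} that monotone nodes give alternating barycentric weights. Hence the corollary follows, and the proof is the one-line remark that it is the simplified, relabelled form of Theorem~\ref{th:jacobi radau}; I expect the write-up to be as terse as the proof of Corollary~\ref{eq:gausslegendrelobatto}, simply citing Theorem~\ref{th:jacobi radau} (and Lemma~\ref{lem:preassigned} for the identification $\tilde w_j = w_j/r_1(x_j)$).
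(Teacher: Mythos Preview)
Your proposal is correct and matches the paper's approach exactly: the paper gives no explicit proof for this corollary, presenting it simply as the concise relabelling of Theorem~\ref{th:jacobi radau} with the common factor $C^{(\alpha,\beta+1)}_{n-1}$ removed, which is precisely the argument you spell out. Your bookkeeping on the index shift $n\mapsto n-1$, the identification $1-x_0=2$, and the uniform cancellation of the constant across both the interior and boundary weights is accurate.
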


Similar results hold if one chooses to add the other endpoint $x=+1$ instead.

We remark that the steps for computing the Gauss-Jacobi-Radau
interpolant are similar to the Lobatto case. We omit the details.

\subsection{Gauss-Radau variant for Laguerre polynomials}\label{ss:laguerre_radau}

Finally, we consider a Radau variant for Laguerre polynomials. We
include the left endpoint $x=0$ of the half-infinite integration
interval $[0,\infty)$ as a pre-assigned quadrature point and thus we
have $r_1(x)=x$. The result is the following.

\begin{theorem}\label{th:laguerre_radau}
Let $x_j$ be the roots of $L_{n+1}^{(\alpha+1)}(x)$ and denote by $\tilde{w}_j$ the corresponding weights of the Gauss-Laguerre-Radau quadrature rule with respect to the Laguerre weight function $w(x)=x^{\alpha} e^{-x}$. Then the barycentric weights corresponding to the interior nodes $x_j$ are given by
\begin{equation}\label{eq:weights_laguerre_radau}
\tilde{\lambda}_{j} = C^{(\alpha+1)}(-1)^j\sqrt{\tilde{w}_j},\quad
j=0,\ldots,n.
\end{equation}
The barycentric weight corresponding to the point $x=0$ is
\[
\hat{\lambda}_1 = -C^{(\alpha+1)}\sqrt{(\alpha+1) \hat{w}_1},
\]
where $C^{(\alpha+1)}$ is defined as in \eqref{eq:lag common
factor}.
\end{theorem}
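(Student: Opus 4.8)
The plan is to mirror, essentially verbatim, the argument used for the Jacobi–Radau case (Theorem~\ref{th:jacobi radau}), with the factor $r_1(x)=x$ playing the role previously played by $r_1(x)=x+1$. Since $w(x)r_1(x)=x^{\alpha}e^{-x}\cdot x=x^{\alpha+1}e^{-x}$ is the Laguerre weight with parameter $\alpha+1$, the orthogonality condition \eqref{eq:orthogonality_pre} forces the interior Radau nodes $x_j$ to be the roots of $L_{n+1}^{(\alpha+1)}(x)$ (consistent with the statement), and Lemma~\ref{lem:preassigned} gives $\tilde{w}_j=w_j/x_j$, where $w_j$ denotes the Gauss--Laguerre weight for the weight $x^{\alpha+1}e^{-x}$. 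For the interior barycentric weights I would start from \eqref{eq:inner_weights}, which reads $\tilde{\lambda}_j=\lambda^x_j/x_j$, insert the expression $\lambda^x_j=C^{(\alpha+1)}(-1)^j\sqrt{x_jw_j}$ from Corollary~\ref{cor:laguerre} applied with parameter $\alpha+1$, and substitute $w_j=x_j\tilde{w}_j$. The two powers of $x_j$ cancel against the $1/x_j$, and \eqref{eq:weights_laguerre_radau} drops out immediately.

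For the weight attached to the endpoint $x=0$, I would use \eqref{eq:outer_weights} with $\pi_{n+1}=L_{n+1}^{(\alpha+1)}$ and $r_1'(x)\equiv 1$, so that $\hat{\lambda}_1=k_{n+1}^{(\alpha+1)}/L_{n+1}^{(\alpha+1)}(0)$. Plugging in the standard values $k_{n+1}^{(\alpha+1)}=(-1)^{n+1}/(n+1)!$ and $L_{n+1}^{(\alpha+1)}(0)=\binom{n+\alpha+2}{n+1}=\Gamma(n+\alpha+3)/[(n+1)!\,\Gamma(\alpha+2)]$, this simplifies to $\hat{\lambda}_1=(-1)^{n+1}\Gamma(\alpha+2)/\Gamma(n+\alpha+3)$, hence $\hat{\lambda}_1^2=\Gamma(\alpha+2)^2/\Gamma(n+\alpha+3)^2$.

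Next I would bring in the closed form of the boundary Gauss--Laguerre--Radau weight $\hat{w}_1$. One can quote \cite{gautschi2000radau}, or derive it in a couple of lines: the Lagrange fundamental polynomial at the node $0$ equals $L_{n+1}^{(\alpha+1)}(x)/L_{n+1}^{(\alpha+1)}(0)$, so $\hat{w}_1=\frac{1}{L_{n+1}^{(\alpha+1)}(0)}\int_0^\infty x^{\alpha}e^{-x}L_{n+1}^{(\alpha+1)}(x)\,dx$, and the contiguous relation $L_m^{(\alpha+1)}=L_m^{(\alpha)}+L_{m-1}^{(\alpha+1)}$ together with orthogonality of $L_m^{(\alpha)}$ against constants shows by induction on $m$ that $\int_0^\infty x^{\alpha}e^{-x}L_m^{(\alpha+1)}(x)\,dx=\Gamma(\alpha+1)$ for all $m\ge 0$; therefore $\hat{w}_1=\Gamma(\alpha+1)\Gamma(\alpha+2)(n+1)!/\Gamma(n+\alpha+3)$. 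Using $(C^{(\alpha+1)})^2=1/[\Gamma(n+\alpha+3)(n+1)!]$ from \eqref{eq:lag common factor}, a short cancellation yields $(\alpha+1)(C^{(\alpha+1)})^2\hat{w}_1=\Gamma(\alpha+2)^2/\Gamma(n+\alpha+3)^2=\hat{\lambda}_1^2$, so $\hat{\lambda}_1=\pm C^{(\alpha+1)}\sqrt{(\alpha+1)\hat{w}_1}$.

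The sign is then fixed exactly as in the earlier proofs: the $n+2$ interpolation points in increasing order are $0<x_0<x_1<\cdots<x_n$ (the Laguerre roots being positive), the barycentric weights alternate in sign, and $\tilde{\lambda}_0=C^{(\alpha+1)}\sqrt{\tilde{w}_0}$ has the sign of $C^{(\alpha+1)}$, so the weight immediately to its left, namely $\hat{\lambda}_1$ at $x=0$, must carry the opposite sign, which gives $\hat{\lambda}_1=-C^{(\alpha+1)}\sqrt{(\alpha+1)\hat{w}_1}$. The only mildly delicate point, as in Theorem~\ref{th:jacobi radau}, is the Gamma-function bookkeeping needed to match $\hat{\lambda}_1^2$ with $(\alpha+1)(C^{(\alpha+1)})^2\hat{w}_1$; the rest is a direct transcription of the Jacobi--Radau argument with $r_1(x)=x$.
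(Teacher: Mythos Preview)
Your proof is correct and follows essentially the same approach as the paper: you use \eqref{eq:inner_weights} together with Corollary~\ref{cor:laguerre} (with parameter $\alpha+1$) and Lemma~\ref{lem:preassigned} for the interior weights, then \eqref{eq:outer_weights} and the explicit values of $k_{n+1}$, $L_{n+1}^{(\alpha+1)}(0)$, $\hat{w}_1$, and $C^{(\alpha+1)}$ for the endpoint, exactly as the paper does. The only difference is that you additionally sketch a short self-contained derivation of $\hat{w}_1$ via the contiguous relation $L_m^{(\alpha+1)}=L_m^{(\alpha)}+L_{m-1}^{(\alpha+1)}$, whereas the paper simply cites \cite{gautschi2000radau}.
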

\begin{proof}
Let $\lambda_{j}^{x}$ be the barycentric weights corresponding to
the point set $\{x_j\}_{j=0}^{n}$. By virtue of
\eqref{eq:inner_weights} and Corollary \ref{cor:laguerre} yields
\begin{align*}
\tilde{\lambda}_{j}=\frac{\lambda_{j}^{x}}{x_j}=C^{(\alpha+1)}(-1)^j\sqrt{\frac{w_j}{x_j}}=C^{(\alpha+1)}(-1)^j\sqrt{\tilde{w}_j}.
\end{align*}
Let $k_{n+1}$ denote the leading coefficient of the Laguerre
polynomial $L_{n+1}^{(\alpha+1)}(x)$. For the barycentric weight
corresponds to the point $x=0$, using \eqref{eq:outer_weights} we
have that
\begin{align}
\hat{\lambda}_1=\frac{k_{n+1}}{L_{n+1}^{(\alpha+1)}(0)}=(-1)^{n+1}\frac{\Gamma(\alpha+2)}{\Gamma(n+\alpha+3)}.
\end{align}
From \cite[Eq. (6.5)]{gautschi2000radau} we have
\begin{align}
\hat{w}_1=\frac{\Gamma(\alpha+1)}{\binom{n+\alpha+2}{n+1}}=\frac{\Gamma(\alpha+1)\Gamma(\alpha+2)\Gamma(n+2)}{\Gamma(n+\alpha+3)},
\end{align}
and hence, by direct computation,
\begin{align}
(C^{(\alpha+1)})^2\hat{w}_1&=\frac{\Gamma(\alpha+1)\Gamma(\alpha+2)}{\Gamma(n+\alpha+3)^2}\nonumber
\\
&=\frac{1}{\alpha+1}\left(\frac{\Gamma(\alpha+2)}{\Gamma(n+\alpha+3)}\right)^2\nonumber
\\
&=\frac{1}{\alpha+1}\hat{\lambda}_1^2.
\end{align}
Equivalently,
\begin{align*}
\hat{\lambda}_1^2=(\alpha+1)(C^{(\alpha+1)})^2\hat{w}_1.
\end{align*}
Noting that barycentric weights have alternating signs, we obtain
the result.
\end{proof}

% \subsection{What is the scope?}\label{ss:scope2}
%
% TODO: update
%
% Finally, we like to point out the limited scope of the reasoning in this section. A crucial element that yields a simple explicit expression for the barycentric weights in terms of Gaussian quadrature weights is the requirement that $p_n(x)$ be a classical orthogonal polynomial. Since the weight has the form $w(x) r_m(x)$, the Jacobi polynomials are the only logical candidates.
%

\section{Numerical examples}\label{s:examples}

In this section we shall show several numerical examples to
illustrate the performance of the barycentric interpolation formula.
All computations were performed in Matlab in double precision
arithmetic.

\begin{figure}[t]
\centering
\includegraphics[width=7cm]{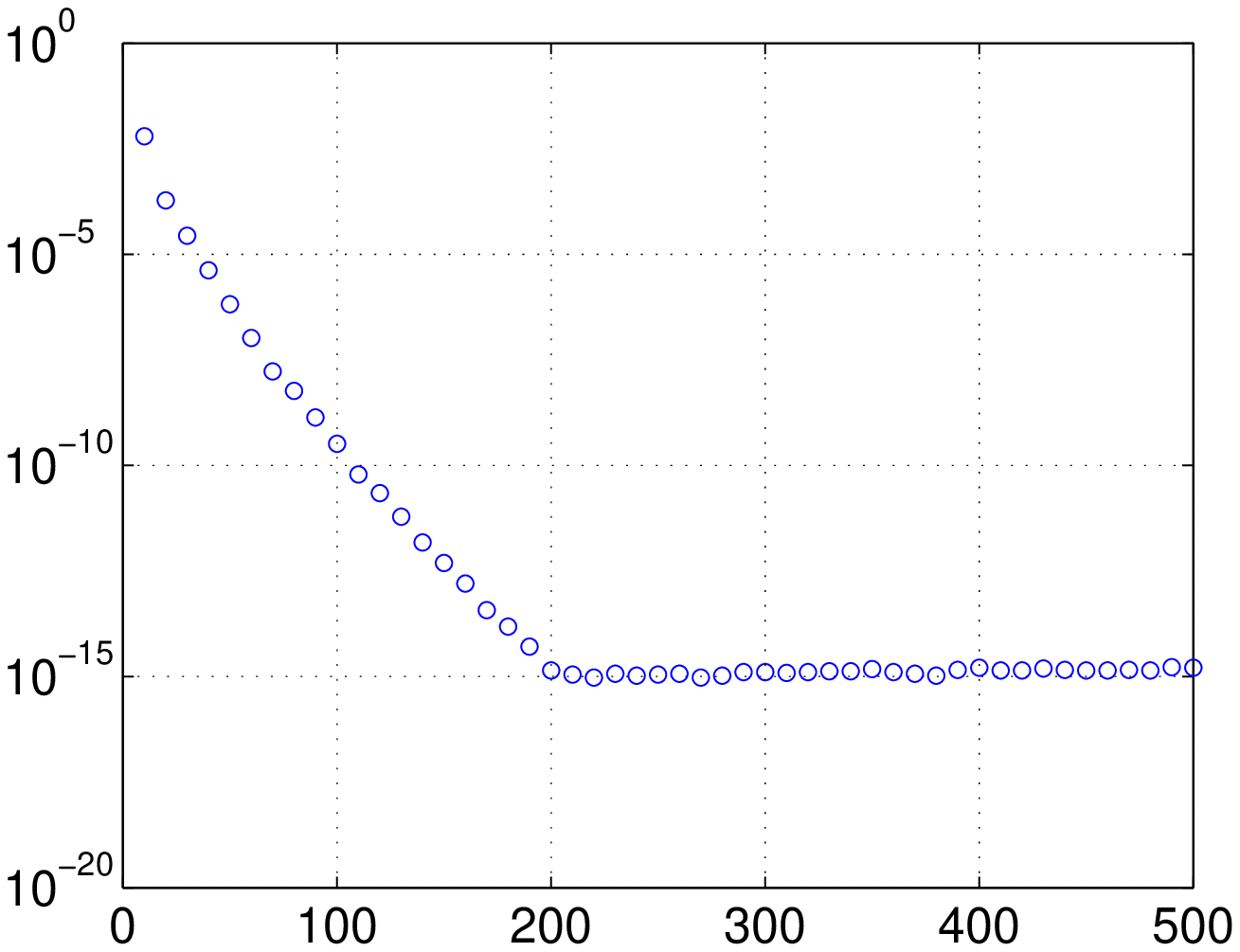}
\quad
\includegraphics[width=7cm]{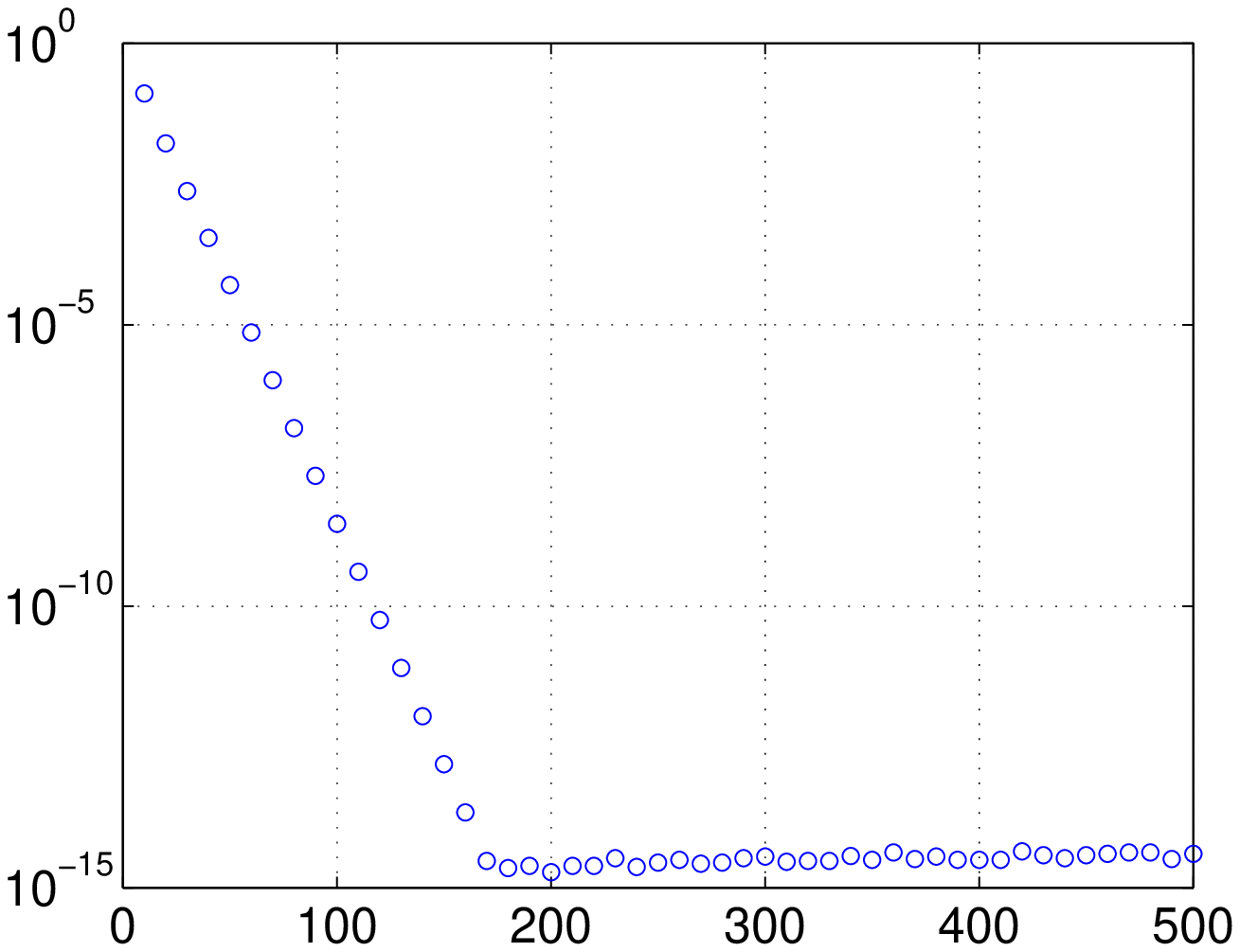}\\
\caption{Convergence of the barycentric Jacobi formula to the
function $f(x)=\frac{1}{1+25x^2}$ (left) and
$f(x)=e^{-\frac{1}{x^2}}$ (right). Here we choose
$\alpha=-\frac{1}{2}$, $\beta=-\frac{1}{4}$ and $n$ ranges from 10
to 500.}\label{firstfig}
\end{figure}

\begin{figure}[t]
\centering
\includegraphics[width=7cm]{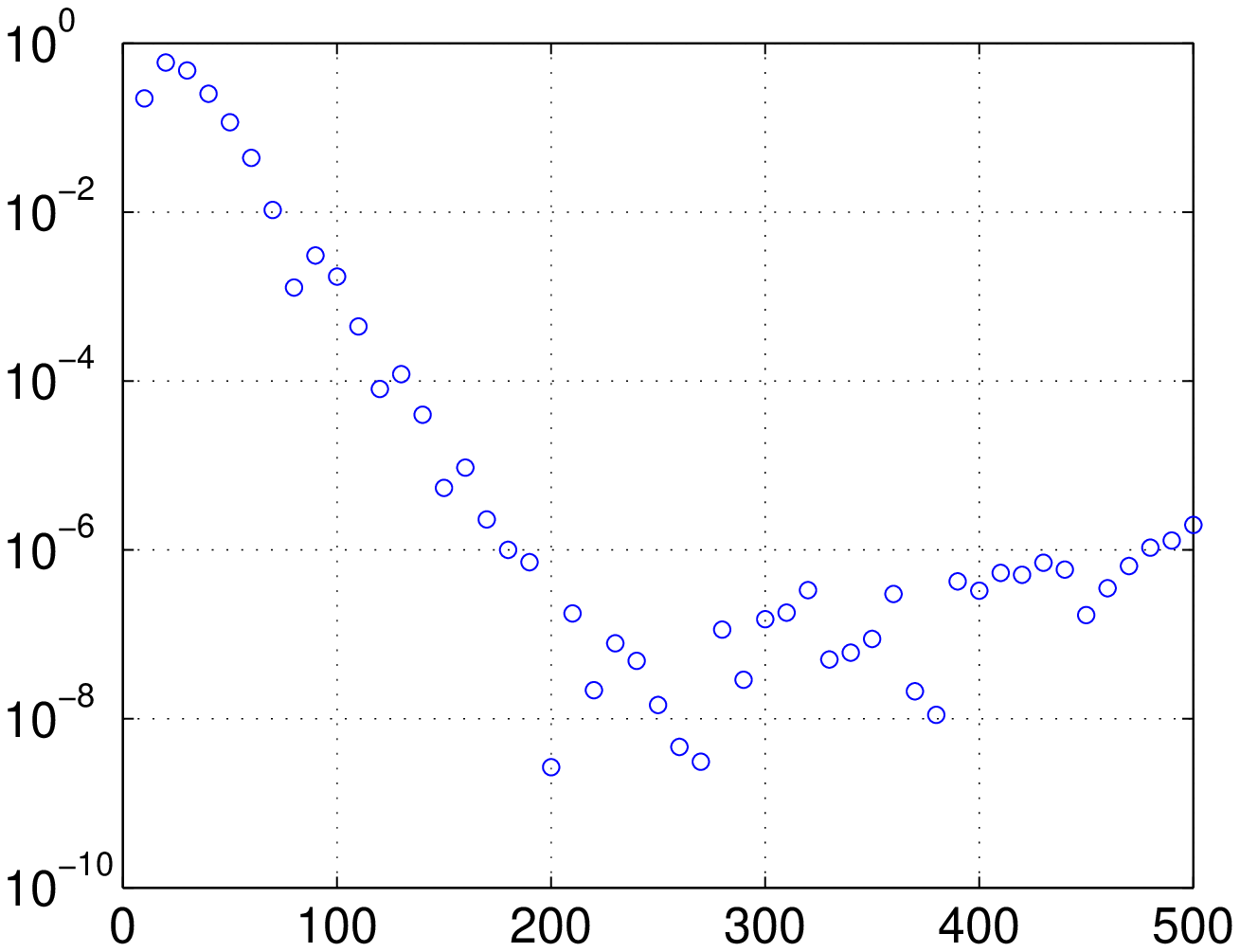}
\quad
\includegraphics[width=7cm]{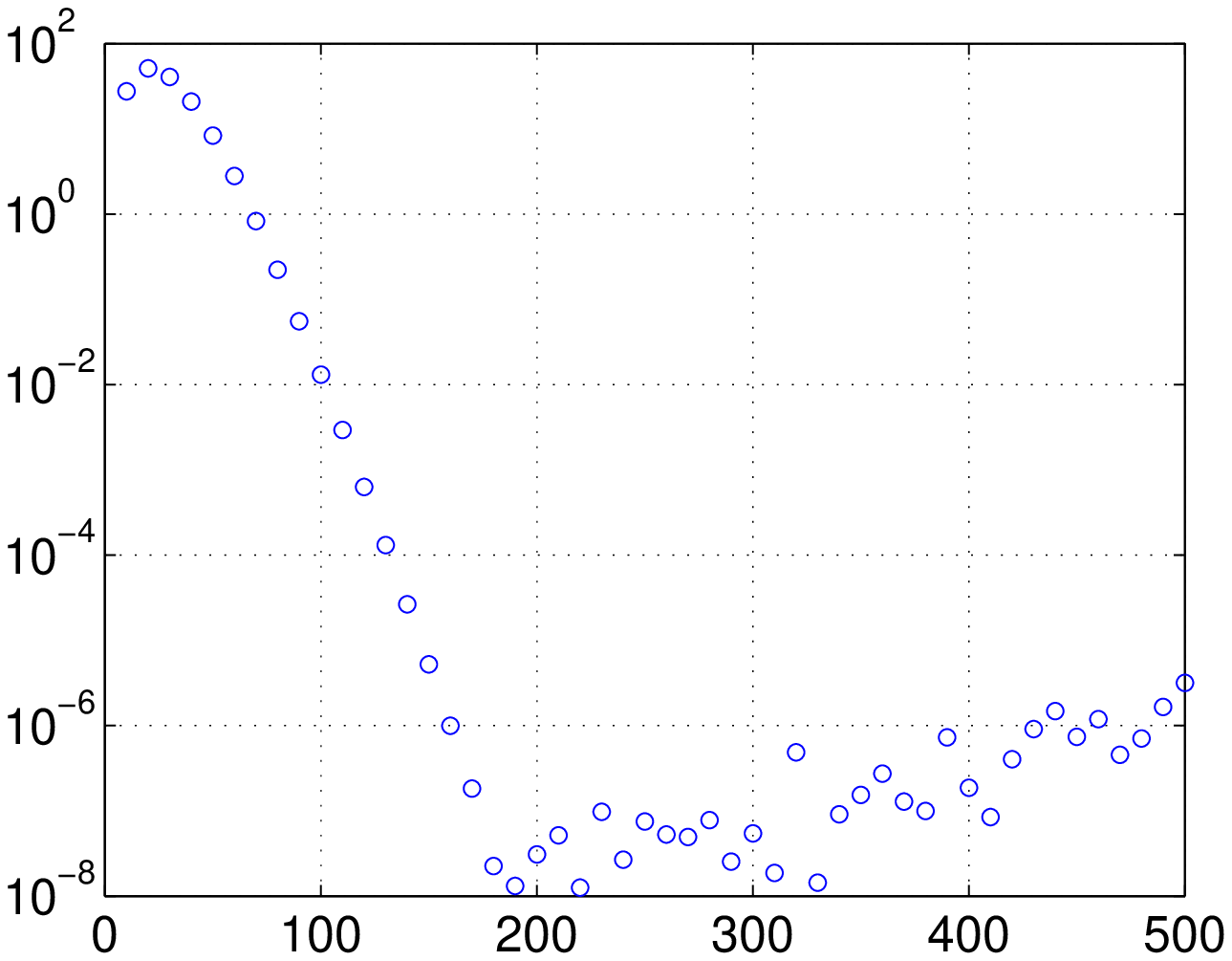}\\
\caption{Convergence of the barycentric Jacobi formula to the two
functions $f(x)=\frac{1}{1+25x^2}$ (left) and
$f(x)=e^{-\frac{1}{x^2}}$ (right). Here, we choose $\alpha=5$,
$\beta=5$ and $n$ ranges from $10$ to $500$.}\label{secondfig}
\end{figure}

\begin{example}
We  first consider the convergence of the barycentric Jacobi
interpolation formula to the two smooth functions
$f(x)=\frac{1}{1+25x^2}$ and $f(x)=e^{-\frac{1}{x^2}}$.
\end{example}
The maximal pointwise error of the barycentric Jacobi formula
\[ \max_{-1\leq x\leq 1}|f(x)-p_n(x)|,    \]
is estimated by measuring at a large number of equispaced points in $[-1,1]$. The nodes and
weights of the Gauss-Jacobi quadrature rule are computed with the Glaser-Liu-Rokhlin algorithm in ${\mathcal O}(n)$ operations. This computation can be performed in the Matlab package Chebfun with the command \texttt{jacpts} \cite{hale2012chebfunquadrature}.
% The barycentric Jacobi weights are computed by \eqref{eq:weights_jacobi} with
% $C^{(\alpha,\beta)}=1$, i.e., we have used the simplified weights.
Starting with Chebfun version 4.0, this routine returns the
simplified barycentric weights as well, using
formula~\eqref{eq:weights_jacobi}\footnote{The simplified
barycentric weights returned by the command $\texttt{jacpts}$,
$\texttt{lagpts}$, $\texttt{hermpts}$ are normalized by setting
their maximum value to one. }. Future versions of this routine are likely to be based on the faster Hale-Townsend algorithm \cite{hale2012fastgauss}, but that will not change the asymptotic computational complexity of the experiment.

%We remark that, for large values of $n$, the computation of the barycentric Jacobi weights with \eqref{eq:weights_jacobi} may result in a loss of precision due the fact that the first and last Jacobi nodes are very close to $\pm 1$. An alternative approach that avoids subtraction of nearly equal numbers is to apply the following formula:
%\[
% \sqrt{1-x_j^2} = \sin( \arccos(x_j)).
%\]
%\GREEN{Haiyong, could you construct an example showing the difference between using the left hand side and the right hand side of this equation? If it is sufficiently significant, we can include this.}

Figure \ref{firstfig} shows the convergence of the
barycentric Jacobi formula with $\alpha=-\frac{1}{2}$ and
$\beta=-\frac{1}{4}$. We can see that the barycentric Jacobi formula
leads to stable computations. For large $\alpha$ and $\beta$, the Lebesgue constant for Jacobi points becomes very large,
typically $\mathcal{O}(n^{\max\{\alpha,\beta\}+\frac{1}{2}})$
\cite[p.~338]{szego1939polynomials}. Hence, the barycentric Jacobi formula will be
unstable. Figure \ref{secondfig} shows the convergence of the
barycentric Jacobi formula for the same two functions with
$\alpha=\beta=5$. We can see that the barycentric Jacobi formula is
indeed unstable for large $n$, confirming the stability analysis of the
barycentric formula by Higham in \cite{higham2004barycentric}.

\begin{example}
Next, we consider the application of the barycentric Jacobi interpolation
formula to the function $J_{\frac{1}{2}}(x)$ on the interval
$[0,1]$, where $J_{\frac{1}{2}}(x)$ is the Bessel function of the
first kind of order $\frac{1}{2}$. Since the function $J_{\frac{1}{2}}(x)$ behaves like $\sqrt{x}$ when $x\rightarrow0$, we interpolate the function
\[
f(x)=\frac{J_{\frac{1}{2}}(x)}{\sqrt{x}}, \quad x\in[0,1].
\]
We apply the following norm to measure the error of the
barycentric interpolation formula:
\[ \int_{0}^{1}\sqrt{x}|f(x)-p_n(x)|^2dx. \]
\end{example}

\begin{figure}[t]
\centering
\includegraphics[width=7.2cm]{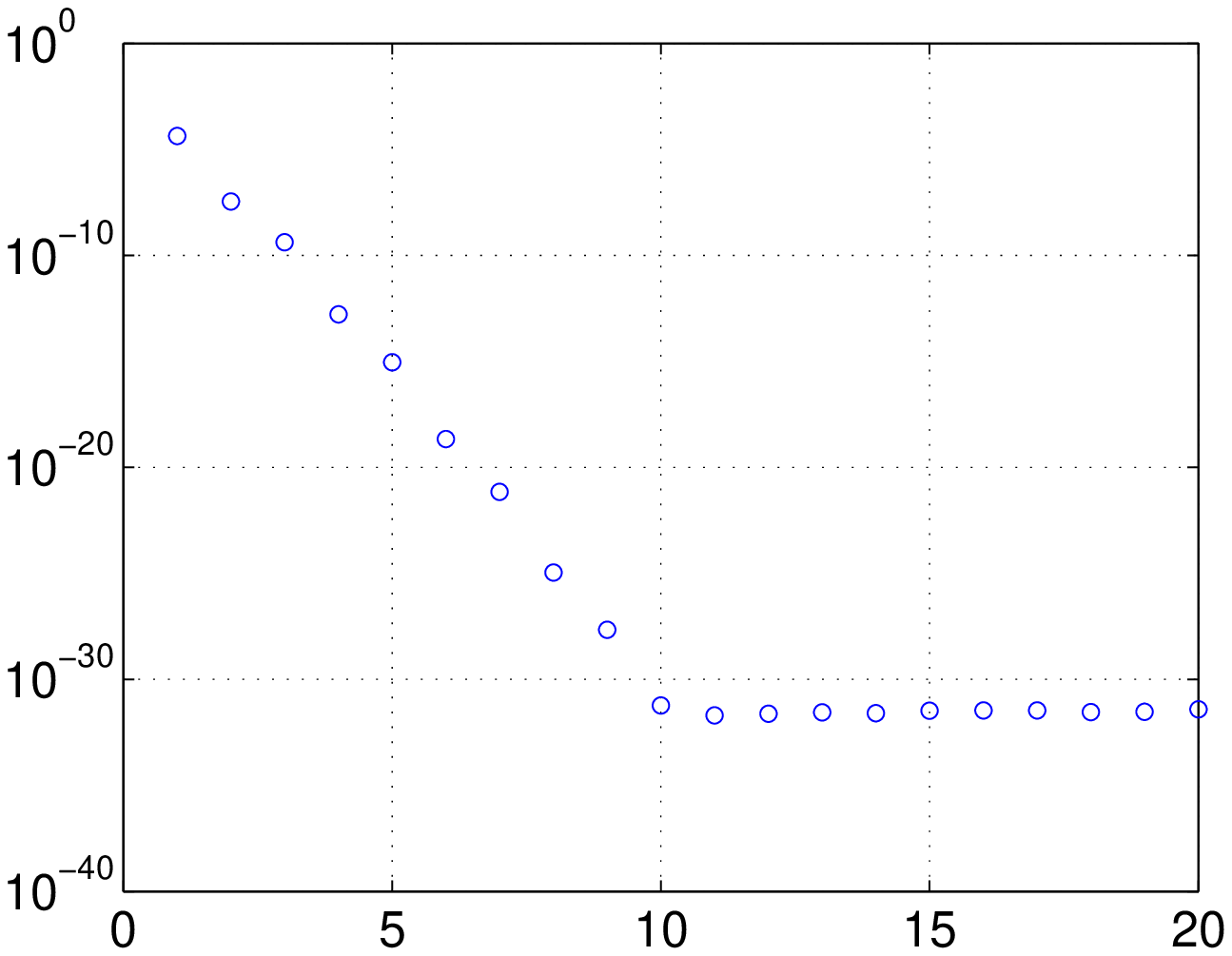}
\quad
\includegraphics[width=7.2cm]{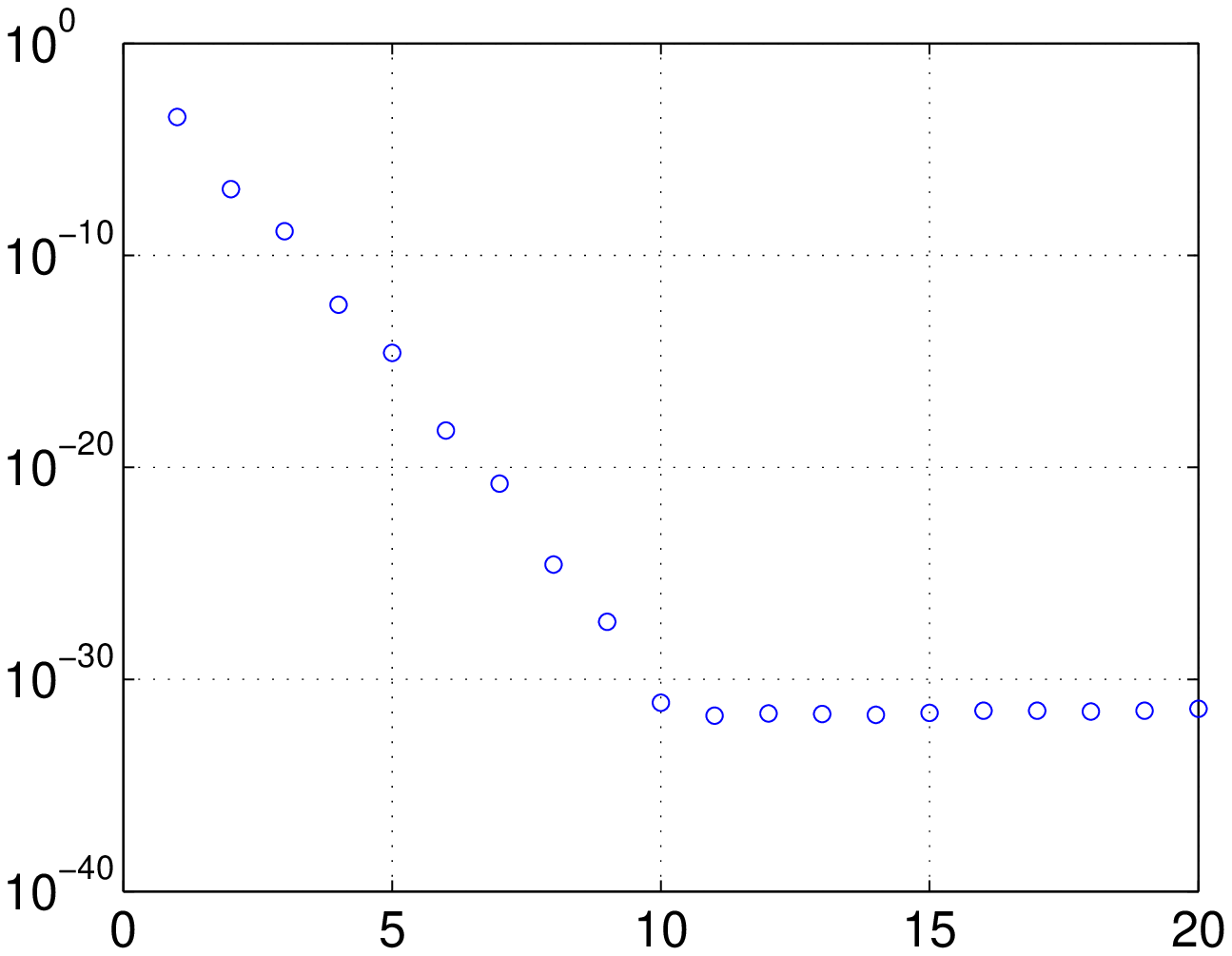}\\
 \caption{Convergence of the barycentric Jacobi (left) and Jacobi Lobatto
 formulas (right). Here $n$ ranges from $1$ to $20$.}\label{thirdfig}
\end{figure}

It is easy to see that this example corresponds to $\alpha=0$ and
$\beta=\frac{1}{2}$. We have applied the barycentric Jacobi and
Jacobi Lobatto formulae to approximate the function $f(x)$. The
barycentric Jacobi weights are computed by \eqref{eq:weights_jacobi}
with $C^{(\alpha,\beta)}_n = 1$. The barycentric
Gauss-Jacobi-Lobatto formula is computed by using the algorithm I
and we have used the simplified barycentric Gauss-Jacobi-Lobatto
weights in our implementation. Numerical results are illustrated in
Figure \ref{thirdfig}.

% For the barycentric Jacobi Lobatto
% weights, we first apply the Glaser-Liu-Rokhlin algorithm to compute
% the quadrature weights $\{w_j\}_{j=1}^{n-1}$ with respect to the
% weight $(1-x)^{\alpha+1}(1+x)^{\beta+1}$ and then calculate the
% interior Gauss-Jacobi-Lobatto quadrature weights by
% \eqref{eq:interior Lobatto quadweights}. %\GREEN{Can't we use a Chebfun routine here, too?}
% The two quadrature weights corresponding to both endpoints are
% computed explicitly \BLUE{if $n$ is smaller than about 100. Note
% than when $n$ is larger than about 100, direct evaluation of both
% boundary quadrature weights would result in an overflow. This
% problem can be avoided by reformulating both boundary quadrature
% weights via logarithms.} Finally, the barycentric Jacobi Lobatto
% weights are evaluated by Corollary \ref{cor:jacobi_lobatto}.

\begin{example}
Finally, we consider the application of the barycentric Laguerre
interpolation formula to the function
$\mathrm{Ai}((\frac{3}{2}(x+1))^{\frac{2}{3}})$ on the interval
$[0,\infty)$, where $\mathrm{Ai}(x)$ denotes the Airy function.
Since this function behaves like $e^{-x}$ when $x\rightarrow\infty$, we interpolate the function
\begin{equation}\label{eq:airy example}
f(x)=\mathrm{Ai}\left((\frac{3}{2}(x+1))^{\frac{2}{3}}\right)e^{x},
\quad x\in[0,\infty).
\end{equation}
We apply the following norm to measure the error of the
barycentric Laguerre interpolation formula:
\[ \int_{0}^{\infty}e^{-x}|f(x)-p_n(x)|dx. \]
\end{example}

\begin{figure}[t]
\centering
\includegraphics[width=7cm]{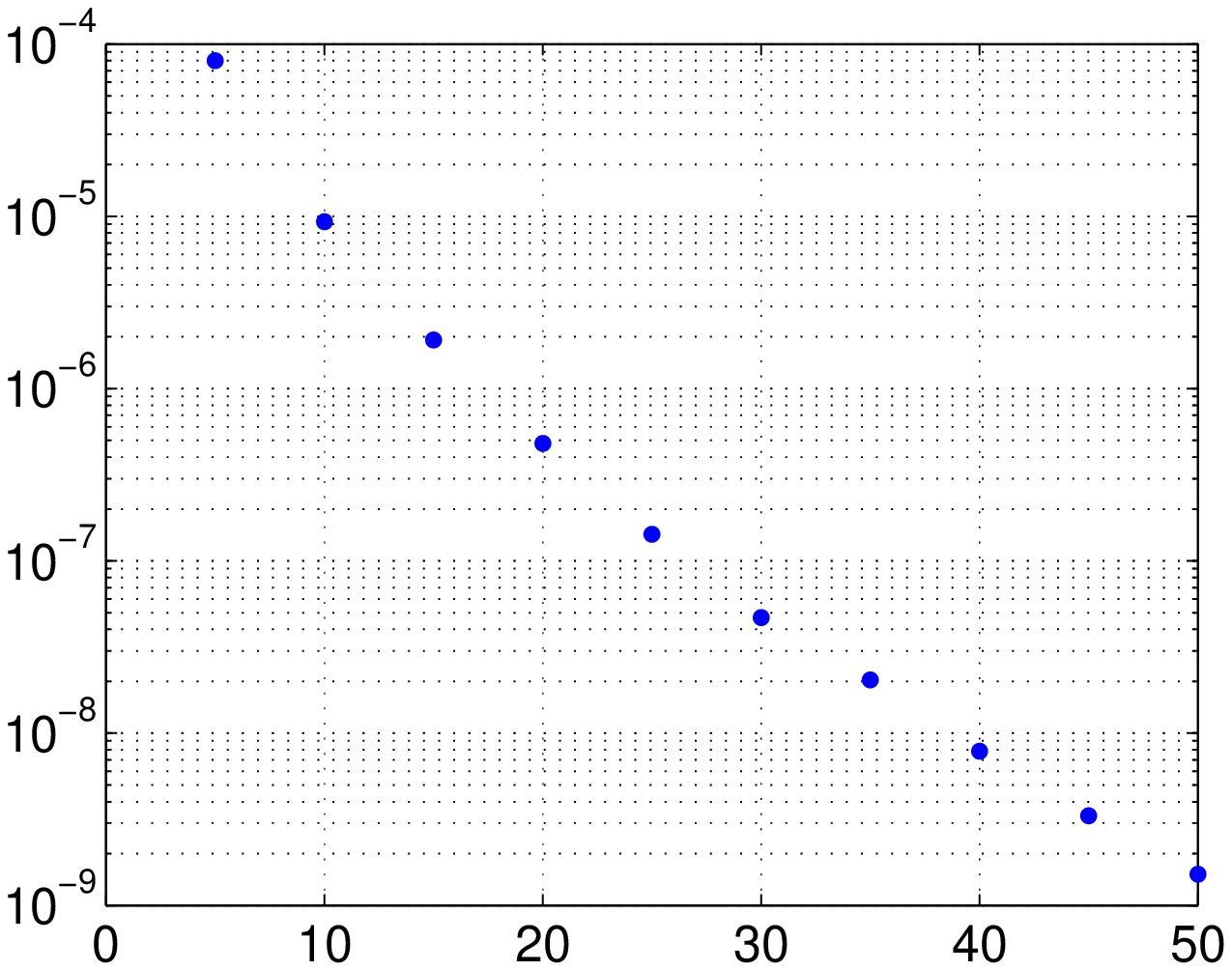}
\quad
\includegraphics[width=7cm]{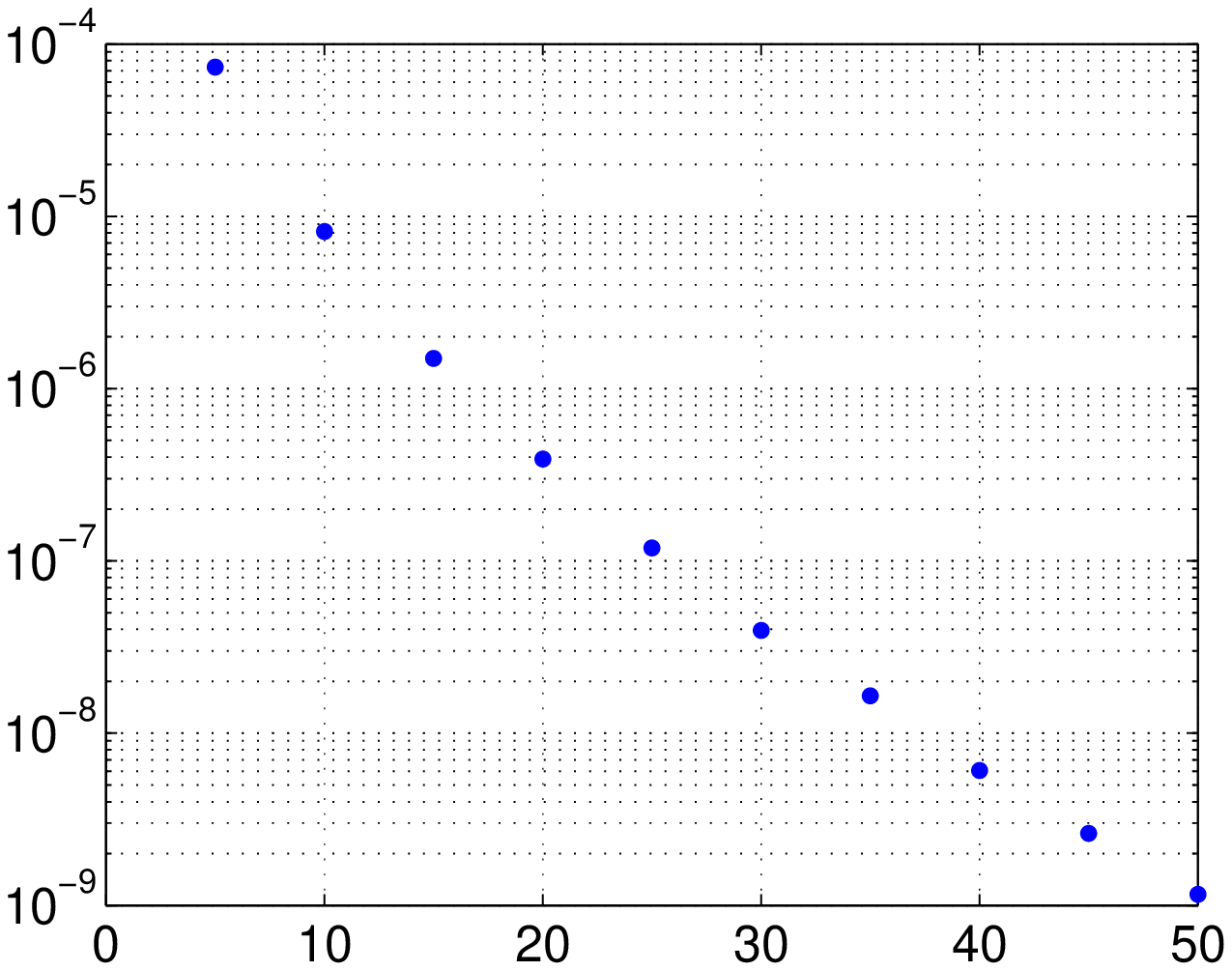}
\caption{Convergence of the barycentric Laguerre (left) and Laguerre
Radau (right) formulae to the function \eqref{eq:airy
example}.}\label{fourthfig}
\end{figure}

For the barycentric Laguerre formula, the nodes and weights of
Gauss-Laguerre quadrature are evaluated in Chebfun with the command
\texttt{lagpts}, which also returns the simplified barycentric
weights using the formula \eqref{eq:weights_laguerre}. For the
barycentric Gauss-Laguerre-Radau formula, the nodes and weights are
evaluated with the Golub-Welsch algorithm, which is based on
computing eigenvalues and eigenvectors of a symmetric tridiagonal
matrix whose elements are obtained from the three-term recurrence
relation satisfied by the Laguerre polynomials \cite{golub1969gauss}. %\GREEN{Could the Glaser-Liu-Rokhlin algorithm be extended to the generalized Laguerre case?}
The barycentric Gauss-Laguerre-Radau weights are computed by Theorem
\ref{th:laguerre_radau}. Numerical results are shown in Figure
\ref{fourthfig}. As we can see, both formulas are of approximately
equal accuracy.

\section{Conclusion}\label{s:remarks}

We have investigated the fast computation of the
interpolation polynomials based on the zeros or extrema of classical
families of orthogonal polynomials. We have shown that the
barycentric weights and the corresponding quadrature weights are
intimately related to each other and that such relationships are a direct
consequence of the existence of lowering operators for orthogonal polynomials. Note
that the nodes and weights of the classical Gaussian quadrature formulas can
be efficiently computed using the Glaser-Liu-Rokhlin algorithm for Laguerre and Hermite polynomials, and by the more efficient Hale-Townsend algorithm \cite{hale2012fastgauss} for the Jacobi polynomials. The interpolation polynomials based on the zeros of these polynomials can thus be computed efficiently by using their barycentric representations.

The formulas for the barycentric weights for the Jacobi, Laguerre and Hermite polynomials were already described and implemented as part of the Chebfun package \cite{trefethen2011chebfun4,hale2012chebfunquadrature}. We have extended the idea to the implementation of the barycentric interpolation in the extrema of these classical
polynomials with some additional boundary points, e.g. Gauss-Radau
and Gauss-Lobatto points. The link between the barycentric weights
and the corresponding quadrature weights is established which allows
the computation of the interpolants in Gauss-Radau and Gauss-Lobatto
points in $\mathcal{O}(n)$ operations as well.

\section*{Acknowledgement}
The authors would like to thank Alfredo Dea\~{n}o and Lun Zhang for
helpful discussions about the theory of lowering operators for
orthogonal polynomials, and Jean-Paul Berrut and Nick Trefethen for their valuable comments on the history and recent developments of barycentric weights and their implementation in Chebfun.

%\bibliographystyle{abbrv}
%\bibliography{barycentric}

 \baselineskip 0.50cm

%\bibliographystyle{plain}
%\bibliography{barycentric}

\begin{thebibliography}{99}

 \bibitem{berrut2004barycentric}
 J. P. Berrut and L. N. Trefethen, Barycentric Lagrange
 interpolation, SIAM Rev.,  46 (2004), 501-517.


\bibitem{bogaert2012legendre}
I. Bogaert, B. Michiels and J. Fostier, $\mathcal{O}(1)$ Computation
of Legendre Polynomials and Gauss--Legendre Nodes and Weights for
Parallel Computing, \emph{SIAM J. Sci. Comput.}, 34 (2012),
C83-C101.


 \bibitem{chen1997ladder}
 Y. Chen and M. E. H. Ismail, Ladder operators and differential
 equations for orthogonal polynomials, \emph{J. Phys. A: Math. Gen.},
 30 (1997), 7817-7829.

 \bibitem{cheney1966approximation}
 E. W. Cheney, \emph{Introduction to Approximation Theory},
 McGraw-Hill, New York, 1966.

 \bibitem{dahlquist2008numerical}
 G. Dahlquist and {\AA}. Bj\"{o}rck, \emph{Numerical Methods in
 Scientific Computing}, Volume I. SIAM, Philadelphia, 2008.

 \bibitem{davis1963interpolation}
 P. J. Davis, \emph{Interpolation and Approximation}, Dover
 Publications Inc., New York, 1975.

 \bibitem{davis1984numericalintegration}
 P. J. Davis and P. Rabinowitz, \emph{Methods of Numerical
 Integration}, Computer Science and Applied Mathematics. Academic
 Press, New York, 1984.

 \bibitem{dutt1996fastpolynomial}
 A. Dutt, M. Gu and V. Rokhlin, Fast algorithms for polynomial
 interpolation, integration and differentiation, \emph{SIAM J. Numer.
 Anal.}, 33 (1996), 1689-1711.


 \bibitem{gautschi1997numericalanalysis}
 W. Gautschi, \emph{Numerical Analysis: An Introduction},
 Birkh\"{a}user, Boston, 1997.

 \bibitem{gautschi2000lobatto}
 W. Gautschi, High-order Gauss-Lobatto formulae, \emph{Numer.
 Algorithms}, 25 (2000), 213-222.

 \bibitem{gautschi2000radau}
 W. Gautschi, Gauss-Radau formulae for Jacobi and Laguerre weight
 functions, \emph{Math. Comp.  Simulation}, 54 (2000), 403-412.

 \bibitem{glaser2007roots}
 A. Glaser, X. Liu and V. Rokhlin, A fast algorithm for the
 calculation of the roots of special functions, \emph{SIAM J. Sci.
 Comput.}, 29 (2007), 1420-1438.

 \bibitem{golub1969gauss}
 G. H. Golub and J. H. Welsch, Calculation of Gauss quadrature rules,
 \emph{Math. Comp.}, 23 (1969), 221-230.

 \bibitem{hale2012fastgauss}
 N. Hale and A. Townsend, Fast and accurate computation of Gauss-Legendre and Gauss-Jacobi quadrature nodes and weights, Technical Report NA-12-13,
 University of Oxford, 2012.

 \bibitem{hale2012chebfunquadrature}
 N. Hale and L. N. Trefethen, Chebfun and numerical quadrature, \emph{Science in China}, 55 (2012), 1749--1760.

 \bibitem{henrici1982essentials}
 P. Henrici, \emph{Essentials of Numerical Analysis}, Wiley, New
 York, 1982.


 \bibitem{higham2004barycentric}
 N. J. Higham, The numerical stability of barycentric Lagrange
 interpolation, \emph{IMA J. Numer. Anal.}, 24 (2004), 547-556.

 \bibitem{mason2003chebyshev}
 J. C. Mason and D. C. Handscomb, \emph{Chebyshev Polynomials}, CRC
 Press, New York, 2003.


 \bibitem{nikiforov1988specialfunctions}
 A. F. Nikiforov and V. B. Uvarov, \emph{Special Functions of
 Mathematical Physics}, Birkh\"{a}user, Basel, 1988.

 \bibitem{platte2011impossibility}
 R. B. Platte, L. N. Trefethen and A. B. J. Kuijlaars, Impossibility
 of fast stable approximation of analytic functions from equispaced
 samples, \emph{SIAM Review}, 53 (2011), 308-318.

 %\bibitem{rainville}
 %E. D. Rainville, \emph{Special Functions}, Macmillan, New York,
 %1960.

 \bibitem{salzer1972chebyshev}
 H. E. Salzer, Lagrangian interpolation at the Chebyshev points
 $x_{n,\nu}=\cos(\nu\pi/n)$, $\nu=0(1)n$; some unnoted advantages,
 \emph{Comput. J.}, 15 (1972), 156-159.

 \bibitem{shen2006spectral}
 J. Shen and T. Tang, \emph{Spectral and High-Order Methods with
 Applications}, Science Press, Beijing, 2006.

 \bibitem{stiefel1963introduction}
 E. L. Stiefel, \emph{An Introduction to Numerical Mathematics}, Academic
 Press, New York, 1963.


 \bibitem{suli2003}
 E. S\"{u}li and D. Mayers, \emph{An Introduction to Numerical
 Analysis}, Cambridge University Press, 2003.

 \bibitem{szego1939polynomials}
 G. Szeg\H{o}, \emph{Orthogonal Polynomials}, Colloquium Publications
 23, A, Providence, Rhode Island, 1939.


 \bibitem{tracy1994fredholm}
 C. A. Tracy and H. Widom, Fredholm Determinants, Differential
 Equations and Matrix Models, \emph{Commun. Math. Phys.}, 163 (1994),
 33-72.


 \bibitem{trefethen2012atap}
 L. N. Trefethen, \emph{Approximation Theory and Approximation Practice}, SIAM,
 Philadelphia, 2012.

 \bibitem{trefethen2011chebfun4}
 L. N. Trefethen and others, Chebfun Version 4.0, The Chebfun
 Development Team, \texttt{http://www.maths.ox.ac.uk/chebfun/}, 2011.

 \bibitem{wang2012legendre}
 H. Wang and S. Xiang, On the convergence rates of Legendre
 approximation, \emph{Math. Comp.}, 81 (2012), 861-877.

 \bibitem{webb2012stability}
 M. Webb, L. N. Trefethen and P. Gonnet, Stability of barycentric
 interpolation formulas for extrapolation, \emph{SIAM J. Sci. Comput.},
 2012. to appear.

 \bibitem{werner1984interpolation}
 W. Werner, Polynomial interpolation: Lagrange versus Newton,
 \emph{Math. Comp.}, 43 (1984), 205-217.



 %\bibitem{winckel}
 %G. V. Winckel,
 %\texttt{http://www.acm.caltech.edu/$\thicksim$acm106/2003/handout/SPRING/lglnodes.m}.


 \end{thebibliography}

\end{document}